\DeclareSymbolFontAlphabet{\mathbbm}{bbold}
\DeclareSymbolFontAlphabet{\mathbb}{AMSb}
\newcommand{\EE}{\ensuremath{\mathbb{E}}}
\newcommand{\NN}{\ensuremath{\mathbb{N}}}
\newcommand{\PP}{\ensuremath{\mathbb{P}}}
\newcommand{\QQ}{\ensuremath{\mathbb{Q}}}
\newcommand{\RR}{\ensuremath{\mathbb{R}}}
\newcommand{\XX}{\ensuremath{\mathbb{X}}}
\newcommand{\YY}{\ensuremath{\mathbb{Y}}}
\newcommand{\de}{\delta}
\renewcommand{\phi}{\varphi}
\newcommand{\Om}{\ensuremath{\Omega}}
\newcommand{\om}{\ensuremath{\omega}}
\newcommand{\w}{\omega}
\newcommand{\bB}{\ensuremath{\mathcal{B}}}
\newcommand{\cC}{\ensuremath{\mathcal{C}}}
\newcommand{\fF}{\ensuremath{\mathcal{F}}}
\newcommand{\pP}{\ensuremath{\mathcal{P}}}
\newtheorem{theorem}{Theorem}
\newtheorem{corollary}[theorem]{Corollary}
\newtheorem{definition}[theorem]{Definition}
\newtheorem{example}[theorem]{Example}
\newtheorem{lemma}[theorem]{Lemma}
\newtheorem{remark}[theorem]{Remark}
\newcommand{\ltn}{\ensuremath{\left| \! \left| \! \left|}}
\newcommand{\rtn}{\ensuremath{\right| \! \right| \! \right|}}
\title[Rough Path Theory to approximate random dynamical systems]
{Rough Path Theory to approximate random dynamical systems}
\author{H. Gao}
 \address[Hongjun Gao]{Institute of  Mathematics\\
School of Mathematical Sciences, Nanjing Normal University\\Nanjing 210046, China\\ }
\email[Hongjun Gao]{05195@njnu.edu.cn}
\author{M.J.Garrido--Atienza}
\address[Mar\'{\i}a J. Garrido--Atienza]{Facultad de Matem\'aticas\\
Avenida Reina Mercedes, s/n, 41012, Sevilla, Spain\\
 }\email[Mar\'{\i}a J. Garrido--Atienza]{mgarrido@us.es}
\author{A. Gu}
\address[Anhui Gu]{School of Mathematics and Statistics\\
Southwest University, Chongqing 400715, China}\email[Anhui Gu]{gahui@swu.edu.cn}
\author{K. Lu}
\address[Kening Lu]{346 TMCB\\
Brigham Young University, Provo, UT 84602, USA} \email[Kening
Lu]{klu@math.byu.edu}
\author{B. Schmalfu{\ss }}
\address[Bj{\"o}rn Schmalfu{\ss }]{Institut f\"{u}r Stochastik\\
Friedrich Schiller Universit{\"a}t Jena, Ernst Abbe Platz 2, D-77043\\
Jena, Germany\\
 }\email[Bj{\"o}rn Schmalfu{\ss }]{bjoern.schmalfuss@uni-jena.de}
\begin{document}
\maketitle
\begin{abstract}
We consider the rough differential equation $dY=f(Y)d\bm \om$ where $\bm \om=(\omega,\bbomega)$ is  a rough path defined by a Brownian motion $\omega$ on $\RR^m$. Under the usual regularity assumption on $f$, namely $f\in C^3_b (\RR^d, \RR^{d\times m})$, the rough differential equation has a unique solution that defines a random dynamical system $\phi_0$. On the other hand, we also consider an ordinary random differential equation $dY_\delta=f(Y_\delta)d\omega_\de$, where $\omega_\de$ is a random process  with stationary increments and continuously differentiable paths that approximates $\omega$. The latter differential equation generates a random dynamical system $\phi_\delta$ as well. We show the convergence of the random dynamical system $\phi_\delta$ to $\phi_0$ for $\delta\to 0$ in H\"older norm.
\end{abstract}

\section{Introduction}\label{s0}

The theory of random dynamical systems (RDS) allows to study the qualitative longtime behavior of differential equations containing random terms.
For a comprehensive overview of this theory we refer to Arnold \cite{MR1723992}.
There are at least two classes of these differential equations with random terms. One class is given by  ordinary differential equations satisfying (roughly speaking) existence and uniqueness conditions with noise dependent coefficients (the meaning of noise is given in Definition
\ref{d1} below). The second class are stochastic differential equations. The solutions of these equations in general contain a stochastic integral in the sense of Ito or Stratonovich, defined as a limit in probability of special Riemann sums related to the integrand and the increments of the noise, see for a precise definition Kunita \cite{MR1070361}. Remarkably, these integrals are defined only almost surely. This leads to the fact that the solutions are defined almost surely where exceptional sets may  depend on the initial condition, causing that the solutions of these equations do not generate an RDS, see Definition \ref{d2}. To derive from an Ito (Stratonovich) equation an RDS some effort is necessary. In particular one has to combine the fact that a stochastic differential equation generates a stochastic flow, see Kunita \cite{MR1070361}, to further apply a perfection technique that gives a version of the solution random field which is an RDS, see Kager and Scheutzow \cite{MR1485117}.

\smallskip

A way to avoid exceptional sets is to consider a pathwise definition of the stochastic integral based on fractional derivatives. For H\"older continuous driven processes with H\"older exponent bigger than $1/2$ the definition of the integral is nothing else than the well-known Young integral, see Young \cite{MR1555421} and Z\"ahle \cite{MR1640795}. In this framework, there are already several investigations proving that the (pathwise) solutions driven by these processes generate  an RDS, and even stating for instance the existence of random attractors and invariant manifolds that describe the longtime behavior of the solution, see Chen {\it et al.} \cite{MR3072986}, Gao {\it et al.} \cite{MR3226746}, Garrido-Atienza {\it et al.} \cite{MR2660869}, \cite {MR2593602} and \cite{MR2738732}. For less regular processes, much less is known. Nevertheless, there are already some results establishing that the solution to evolution equations driven by H\"older continuous functions with H\"older exponent in $(1/3,1/2]$ generates an RDS, see Garrido-Atienza {\it et al.} \cite{MR3479690}. The main tool used in that paper is a compensation of fractional derivatives. Note that the case of a Brownian motion is included in these considerations. An alternative method to obtain from a stochastic differential equation an RDS is to interpret such an equation in the rough path sense. Indeed the stochastic integral related to this theory is pathwise as well, so that one can avoid exceptional sets depending  on the initial states. The cocycle property then follows easily from the additivity and a shift property of the rough path integral. In this context we would like to mention the recent papers Bailleul {\it et al.} \cite{MR3624539} and Hesse and Neamtu \cite{MR}.

\smallskip

The main goal of this article is to approximate the RDS generated by a rough equation by RDS generated by random ordinary differential equations.
In order to do that, we shall have to calculate the difference of solutions of two rough differential equations driven by different noises, for which it will be necessary to estimate the distance of the two noises in a particular metric, that in fact measures the distance of the two noise paths via the H{\"o}lder norm but also the distance
of the corresponding L\'evy areas (that are second order processes) of the lifted noises in H{\"o}lder norm. Then in a first step we have to show that these L\'evy areas are well-defined. To be more precise, for a path $\omega$ of the Brownian motion we define the  stationary Gaussian noise $\omega_\delta$ such that
$\omega^\prime_\delta(r):=\frac{1}{\delta}\theta_r\omega(\delta)$ such that $\delta>0$ and $\theta_r$ is defined by (\ref{ws}) below, and consider the Gaussian process
$X_\delta:=\omega_\delta-\omega$. We then show that this random process has a particular increment covariance so that the stochastic integration theory by Friz and Hairer \cite{MR3289027} or by Friz and Victoir \cite{MR2604669} can be applied.  As a result, we obtain that the moments of the L\'evy areas of $X_\delta$ converge to zero when $\delta$ goes to zero. This then gives us the convergence of ${\bf \omega}_\delta$ to ${\bf \omega}$ and their corresponding L\'evy areas as well. Hence the solution of the differential equation driven by $\omega_\delta$ converges to the solution of the rough equation driven by a path of the Brownian motion $\omega$ and the same is true for the RDS generated by these equations.

\smallskip

This kind of approximation by $\omega_\delta$ has  been  used for stochastic ordinary and partial differential equations when the noise is very simple, see Gu {\it et al.} \cite{MR3779632} and \cite{MR3918170}. In these papers the authors prove the convergence of the attractors of the RDS driven by $\omega_\delta$ to the attractor of the RDS of the original equation. Due to the fact that the noise in these papers is trivial (additive noise or linear multiplicative noise), the authors apply a transformation technique rewriting the original stochastic equation into a random equation. In particular, with this stra\-tegy the convergence of the L\'evy area is not necessary. With the techniques developed in the current paper, in forthcoming investigations we will try to obtain similar results regarding convergence of random attractors, but without transformations of the stochastic system into a random one and for more complicated nontrivial diffusion coefficients.

\smallskip
The article is organized as follows: In Section \ref{s1} we introduce the definitions of rough path, metric dynamical system and RDS. In Section \ref{s2} we define stochastic integrals based on the increment covariance and formulate the Kolmogorov criterion for rough paths. In Section \ref{s3} we apply these results to obtain that the moments of the L\'evy area of $X_\delta$ converge to zero to further establish that the area related to ${\bf \omega}_\delta$ converges to the area related to ${\bf \omega}$ when $\delta\to 0$. This interesting fact will be exploited to prove the convergence of the RDS generated by the solution of the equation driven by $\omega_\delta$ to the RDS generated by the solution of the equation driven by $\omega$, in the case that $\omega$ is the Brownian motion. In the \hyperref[appn]{Appendix} we calculate the covariance of $X_\delta$ and $\omega_\delta$ when $\omega$ is a fractional Brownian motion with any Hurst parameter $H\in (0,1)$.

\section{Preliminaries on Rough Path Theory and Random Dynamical Systems}\label{s1}

For a given $T_1<T_2 \in \RR$, denote by $\Delta[T_1,T_2]$ the simplex $\Delta[T_1,T_2]=\{(s,t): T_1\leq s\leq t\leq T_2\}.$
Let $X$ be a mapping from $[T_1,T_2]$ to $\RR^m$ and $\XX$ be a mapping from $\Delta[T_1,T_2]$ to $\RR^{m\times m}$. These mappings are called H{\"o}lder continuous  if the seminorms given by
$$\ltn X\rtn_\beta :=\sup_{(s,t)\in \Delta[T_1,T_2], s\neq t} \frac{|X(t)-X(s)|}{(t-s)^\beta}, \quad \ltn \XX\rtn_{2\beta} :=\sup_{(s,t)\in \Delta[T_1,T_2], s\neq t} \frac{|\XX(s,t)|}{(t-s)^{2\beta}}$$
are finite. In that case, we write $X\in C^{\beta}([T_1,T_2],\RR^m)$ and  $\XX\in C^{2\beta}( \Delta[T_1,T_2],\RR^{m \times m})$.

\begin{definition}A $\beta$--rough path on an interval $[T_1,T_2]$ with values in $\RR^m$ consists of a function $X\in C^{\beta}([T_1,T_2],\RR^m)$, $\beta\in (1/3,1/2)$, as well as a second order
process $\XX\in C^{2\beta}( \Delta[T_1,T_2],\RR^{m \times m})$, subject to the Chen's algebraic relation
\begin{equation}\label{chen}
\XX(s,t)-\XX(s,u)-\XX(u,t)=(X(u)-X(s))\otimes (X(t)-X(u))
\end{equation}
for $T_1\leq s\le u\le t\leq T_2$.
\end{definition}
$X$ is usually referred as the {\it path component} whereas $\XX$ is the {\it (L\'evy) area component}.
Note that when $X$ is smooth, we can define $\XX$ by
\begin{equation*}
  \XX(s,t)=\int_s^t  (X(r)-X(s))\otimes  X^\prime(r) dr.
\end{equation*}
However, in the rough case we need to find sufficient conditions under which $X$ can be {\it enhanced} with a well-defined area component $\XX$, see Section \ref{s2}. This property is also known as $X$ can be {\it lifted} to  a rough path ${\bf X}=(X,\XX)$ and ${\bf X}$ is the lift of the process $X$.\\

For $\beta\in (1/3,1/2)$ we denote by $\mathcal C^\beta([T_1, T_2],\RR^m)$ the space of H\"older rough paths ${\bf X}:=(X,\XX)$.

If, in addition, the symmetric part of $\XX$ is given by
$$\text{Sym}(\XX(s,t))=\frac{1}{2}(X(t)-X(s))\otimes (X(t)-X(s))$$
then we will say that ${\bf X}$ is a {\it geometric} H\"older rough path and denote the corres\-ponding space by $\mathcal C^\beta_g([T_1, T_2],\RR^m)$.\\

It is interesting to point out that the space of geometric rough paths can be defined as the closure of smooth paths, enhanced with their iterated Riemann integrals, with respect to the metric $\rho_{\beta}$ defined below, in $\mathcal C^\beta([T_1, T_2],\RR^m)$, see \cite{MR3289027}, Exercise 2.12.\\

Moreover, given two rough paths ${\bf X}, {\bf Y}\in  \mathcal C^\beta([T_1, T_2],\RR^m)$ we define the H\"older rough path metric by
\begin{equation*}
  \rho_{\beta,T_1,T_2}({\bf X},{\bf Y}):=\sup_{(s,t)\in \Delta[T_1,T_2], s\neq t}\bigg(\frac{|X(t)-X(s)-Y(t)+Y(s)|}{(t-s)^\beta}+\frac{|\XX(s,t)-\YY(s,t)|}{(t-s)^{2\beta}}\bigg)
\end{equation*}
and the $\beta$-H\"older homogeneous rough path norm of $\bf X$ by
\begin{equation*}
\ltn\bf X\rtn_{\beta}:=\ltn X\rtn_{\beta}+\sqrt{\ltn \XX\rtn_{2\beta}}.
\end{equation*}

In what follows we introduce the concept of an RDS.  We start with the definition of a general noise.

\begin{definition}\label{d1}
Let $(\Omega, \fF, \PP)$ be a probability space. A metric dynamical system on $(\Omega, \fF, \PP)$ is given by a family of measurable transformations
$$\theta: (\RR\times \Omega,\bB(\RR)\otimes \fF)\to (\Omega,\fF)$$
such that
\begin{enumerate}
\item For $t_1,t_2\in \RR$, $$\theta_{t_1+ t_2}=\theta_{t_1}\circ\theta_{t_2}=\theta_{t_2}\circ \theta_{t_1}.$$
\item The measure $\PP$ is invariant with respect to $\theta$.
\item $\PP$ is ergodic with respect to $\theta_t$, that is, for any $\theta_t$-invariant set $A\in \fF$, either $\PP(A)=1$ or $\PP(A)=0$.
\end{enumerate}
\end{definition}

\begin{example}\label{ex1}

A fractional Brownian motion $B^H$ in $\RR^m$ with Hurst parameter $H\in (0,1)$ is a centered and continuous Gaussian process with covariance
\begin{equation*}
  R_{B^H}(s,t)=\frac12 (|t|^{2H}+|s|^{2H}-|t-s|^{2H}){\rm id}\quad t,\,s\in \RR,
\end{equation*}
where ${\rm id}$ is the identical matrix in $\RR^{m \times m}$, that is, we assume  that $B^H$ has independent components.
No matter the value of the Hurst parameter, we can consider a canonical version of this random process given by
\begin{equation*}
  (C_0(\RR,\RR^m),\bB(C_0(\RR,\RR^m)),\PP_H,\theta),
\end{equation*}
where $C_0(\RR,\RR^m)$ is the topological space of continuous functions which are zero at zero equipped with the compact open topology. This topology is metrizable giving us a Polish space. $\bB(C_0(\RR,\RR^m))$ denotes the Borel $\sigma$-algebra of $C_0(\RR,\RR^m)$ and the probability $\PP_H$ is the Gaussian measure of $B^H$ generated by the covariance $R_{B^H}$. For $\theta$ we consider the so-called Wiener shift defined by
\begin{equation}\label{ws}
  \theta_t\omega(\cdot)=\omega(\cdot+t)-\omega(t),\quad\text{for }t\in\RR.
\end{equation}
Then $(C_0(\RR,\RR^m),\bB(C_0(\RR,\RR^m)),\PP_H,\theta)$ is a metric dynamical system, see \cite{MR2836654}. In addition there exists a $\theta$-invariant set $\Omega$ of full $\PP_H$--measure such that $\omega\in \Omega$ is $\beta$--H{\"o}lder continuous for every $\beta<H$ on any interval $[-T,T]$. On $\Omega$ we choose $\fF$ to be the trace-algebra of $\bB(C_0(\RR,\RR^m))$ with respect to $\Omega$, and for the restriction of $\PP_H$ to this new $\sigma$-algebra we use again the same symbol $\PP_H$. In the following we will work on $(\Omega,\fF,\PP_H,\theta)$, that is also a metric dynamical system, see \cite{MR2660867}.

Finally, note that when $H=1/2$ the fractional Brownian motion reduces to the Brownian motion $B^{1/2}$.
\end{example}

Now we introduce the definition of a random dynamical system (RDS).

\begin{definition}\label{d2}
Let $(\Omega, \fF, \PP,\theta)$ be a metric dynamical system. An RDS over $(\Omega, \fF, \PP,\theta)$ is given by a measurable mapping $\varphi:\RR^+ \times \Omega \times \RR^m \mapsto \RR^m$ such that
\begin{enumerate}
\item $\varphi(0,\omega,\cdot)={\rm id},$ for all $\omega \in \Omega$.
\item The cocycle property holds true, that is,
$$\varphi(t_1+t_2,\omega,\cdot)=\varphi(t_2,\theta_{t_1} \omega,\varphi(t_1,\omega, \cdot))$$
for all $\omega \in \Omega$ and $t_1,t_2\in \RR^+$.
\end{enumerate}
\end{definition}

Later we will deal with rough differential equations, that is, differential equations that are driven by rough (non-differentiable) paths. We will use the rough path theory to give a meaning to these equations. The stochastic integral will be given in terms of compensated Riemann sums, thus the pathwise solution will enjoy the property that it generates an RDS, provided that the considered noise forms a metric dynamical system. This will be the case since in this paper we restrict our considerations to Brownian motion, whose canonical interpretation forms a metric dynamical system as we have already stated in Example \ref{ex1}. The theory of RDS will be further applied in forthcoming works to study longtime behavior properties of solutions of rough differential equations.

We would like to refer to the papers \cite{MR3624539} and \cite{MR}, where it is also investigated the generation of RDS by the solutions of rough equations.

 \section{General results on the lift of Gaussian processes}\label{s2}

In this section, we would like to remind several results (that the reader can find in the monograph \cite{MR3289027}) which shall help in order to construct the lift of a Gaussian process.

\smallskip

Let $X$ be a one dimensional process on $\RR$ and suppose that $X(t)\in L_2$. Then
\begin{equation*}
  R_X(s,t):=\EE\,X(s) X(t)
\end{equation*}
is the covariance  of $X$. We define  for $s<t$, $s'<t'$ the incremental covariance by
\begin{equation*}
  R\left(
           \begin{array}{cc}
             s & t \\
             s^\prime & t^\prime \\
           \end{array}\right)
         :=\EE(X(t)-X(s))(X(t^\prime)-X(s^\prime))
\end{equation*}
and the $\rho$-variation over an rectangle $[s,t]^2$ by
\begin{equation*}
  \|R_X\|_{\rho,[s,t]^2}:=\bigg(\sup_{\begin{array}{l}\pP(s,t),\\
  \pP^\prime(s,t)\end{array}}\sum_{\begin{array}{l}[\sigma,\tau]\in \pP,\\
  {[}\sigma^\prime,\tau^\prime{]}\in \pP^\prime\end{array}}\bigg| R\left(
           \begin{array}{cc}
             \sigma & \tau \\
             \sigma^\prime & \tau^\prime \\
           \end{array}\right)\bigg|^\rho\bigg)^{1/\rho}
\end{equation*}
for $1\le\rho<2$.

In this article, $\pP(s,t)$ denotes as usual a partition of the considered interval $[s,t]$ and $|\pP(s,t)|$ denotes the corresponding maximal mesh length.

\smallskip

The finiteness of the  $\rho$-variation over rectangles allows us to define a stochastic integral for Gaussian processes, see Friz and Hairer \cite{MR3289027}, Proposition 10.3.

\begin{theorem}\label{t1}
Let $X,\,Y$ be two continuous centered independent Gaussian processes such that for some $1\le\rho<2$
\begin{equation*}
  \|R_X\|_{\rho,[s,t]^2},\,\|R_Y\|_{\rho,[s,t]^2}<\infty.
\end{equation*}
Then for every $0\leq s\le t$  the stochastic integral
\begin{equation*}
  \int_s^t(Y(r)-Y(s))\otimes dX(r)
\end{equation*}
is defined as an $L_2$-limit of the Stieltjes integrals $\int_\pP (Y(r)-Y(s))\otimes dX(r)$ when $|\pP(s,t)| \to 0$. Moreover,
\begin{equation*}
  \EE\bigg(\int_s^t (Y(r)-Y(s))\otimes dX(r)\bigg)^2\le C \|R_X\|_{\rho,[s,t]^2}\,\|R_Y\|_{\rho,[s,t]^2}
\end{equation*}
holds, where the constant $C$ depends on $\rho$.
\end{theorem}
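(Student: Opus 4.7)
The plan is to show that the Riemann--Stieltjes sums
\[
S_\pP:=\sum_{[\sigma,\tau]\in\pP}(Y(\sigma)-Y(s))\otimes(X(\tau)-X(\sigma))
\]
form a Cauchy net in $L_2$ as $|\pP(s,t)|\to 0$ and that the limit obeys the stated second-moment bound.

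First I would exploit that $X$ and $Y$ are centered, independent and Gaussian. A componentwise reduction lets us assume $X,Y$ are scalar, and then independence together with Wick's formula makes every mixed second moment involving an increment of $X$ and an increment of $Y$ factorise into a product of two incremental covariances. Concretely, for any two partitions $\pP,\pP'$ of $[s,t]$ one obtains
\begin{equation*}
\EE\bigl(S_\pP\,S_{\pP'}\bigr)
=\sum_{\substack{[\sigma,\tau]\in\pP\\ [\sigma',\tau']\in\pP'}}
R_Y\!\left(\begin{array}{cc}s&\sigma\\ s&\sigma'\end{array}\right)\,
R_X\!\left(\begin{array}{cc}\sigma&\tau\\ \sigma'&\tau'\end{array}\right),
\end{equation*}
which expresses the second moment (and the cross moments of two such Riemann sums) purely in terms of the incremental covariances of $X$ and $Y$, without requiring any path regularity.

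Next I would identify this double sum as a two-dimensional Riemann--Stieltjes approximation to the iterated Young integral $\iint_{[s,t]^2}\mathrm{d}R_Y\,\mathrm{d}R_X$, with $R_X,R_Y$ viewed as additive rectangle functions on $[s,t]^2$. The decisive input is the 2D Young integration theorem (Towghi's extension of Young's inequality, as presented in \cite{MR3289027,MR2604669}): for two planar maps of finite 2D $\rho$-variation with $\rho<2$ the critical exponent $2/\rho>1$ holds, so the 2D Young integral exists as the limit of its Riemann--Stieltjes approximations and satisfies
\[
\Bigl|\iint_{[s,t]^2}\mathrm{d}R_Y\,\mathrm{d}R_X\Bigr|\le C(\rho)\,\|R_X\|_{\rho,[s,t]^2}\,\|R_Y\|_{\rho,[s,t]^2}.
\]
Setting $\pP=\pP'$ in the factorisation and letting $|\pP|\to 0$ gives the announced $L_2$-estimate. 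Applied instead to $S_\pP-S_{\pP'}$, the same identity together with the convergence part of the 2D Young theorem yields $\EE|S_\pP-S_{\pP'}|^2\to 0$ as the meshes shrink, whence the Cauchy property and the existence of the $L_2$ limit.

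The main obstacle I anticipate is the bookkeeping for the 2D Young step: one must verify that $R_X$ and $R_Y$ genuinely define additive rectangle functions on $[s,t]^2$ and that the $\rho$-variation norm used in the hypothesis (a supremum over pairs of partitions, one for each axis, of the same interval $[s,t]$) coincides, up to a fixed constant, with the 2D $\rho$-variation required by Towghi's theorem. Once this compatibility is settled, the Gaussian factorisation does the analytic work: the fourth moment of two products of independent Gaussian increments collapses to a product of two covariance increments, and the strict inequality $\rho<2$ provides the critical 2D Young exponent $2/\rho>1$ that closes the argument.
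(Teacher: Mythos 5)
The paper does not prove this statement itself but quotes it from Friz--Hairer \cite{MR3289027} (Proposition 10.3), and your proposal is a correct reconstruction of exactly that argument: independence of $X$ and $Y$ factorises the second moment of the Riemann--Stieltjes sums into a double sum of products of rectangular covariance increments, which is then controlled by the two-dimensional Young--Towghi inequality using $1/\rho+1/\rho>1$, yielding both the $L_2$-Cauchy property and the stated bound. The compatibility issues you flag (additivity of the rectangle functions and agreement of the grid-type $\rho$-variation with the one in the 2D Young theorem) are indeed the only bookkeeping points, and they are resolved in the cited source, so there is no gap.
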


\begin{theorem}\label{Th10.4}
Let $X=(X^1,\cdots,X^m)$ be a continuous centered Gaussian process in $\RR^m$ with independent components, for which there exist $\rho\in [1,\frac{3}{2})$ and $M>0$ such that for every $i\in \{1,\cdots,m\}$ and all $0\leq s\leq t\leq T$
$$\|R_{X^i}\|_{\rho,[s,t]^2}\leq M |t-s|^{1/\rho}.$$
Define, for $1\leq i< j\leq m$ and $0\leq s< t\leq T$, in the $L^2$-sense, the process
\begin{equation*}
  \XX^{i,j}(s,t)=\int_s^t (X^i(r)-X^i(s))dX^j(r)
\end{equation*}
and
\begin{align*}
  \XX^{i,i}(s,t)=&\frac12(X^i(t)-X^i(s))^2,\qquad\\
   \XX^{j,i}(s,t)=&-\XX^{i,j}(s,t)+(X^i(t)-X^i(s))(X^j(t)-X^j(s)).
\end{align*}
Then there exists a version of $X$ and $\XX$  so that for any $\beta\in (\frac{1}{3},\frac{1}{2\rho})$ we have a rough path ${\bf X}=(X,\XX)\in \mathcal C^\beta_g([0,T],\RR^m)$  almost surely.
\end{theorem}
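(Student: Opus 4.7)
The plan is to construct the rough path in three conceptual stages: (1) define and bound the area component in $L^2$; (2) upgrade the bounds to arbitrarily high moments using Gaussianity; (3) invoke the Kolmogorov criterion to obtain a H\"older continuous version that satisfies Chen's relation and the geometric condition.

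First I would handle the area component. For the off--diagonal entries $\XX^{i,j}$ with $i\neq j$, the components $X^i$ and $X^j$ are independent centered Gaussian processes and both satisfy $\|R_{X^i}\|_{\rho,[s,t]^2}\le M|t-s|^{1/\rho}$. Thus Theorem \ref{t1} applies directly with $Y=X^i$ and the integrator $X^j$: the $L^2$-limit
\begin{equation*}
  \XX^{i,j}(s,t)=\int_s^t(X^i(r)-X^i(s))\,dX^j(r)
\end{equation*}
exists, and
\begin{equation*}
  \EE|\XX^{i,j}(s,t)|^2\le C\,\|R_{X^i}\|_{\rho,[s,t]^2}\,\|R_{X^j}\|_{\rho,[s,t]^2}\le C M^2|t-s|^{2/\rho}.
\end{equation*}
The diagonal terms $\XX^{i,i}$ are defined explicitly, and for them the moment bound follows from $\EE|X^i(t)-X^i(s)|^2\le M|t-s|^{1/\rho}$ together with the Gaussian fourth-moment identity. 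The symmetrized off-diagonal entries $\XX^{j,i}$ inherit the same $L^2$-bound.

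Second, I would upgrade these to higher moments. Each $\XX^{i,j}(s,t)$ lies in a fixed inhomogeneous Wiener chaos of order at most two (a product of two first-chaos variables, or an $L^2$-limit of such), so Gaussian hypercontractivity (equivalence of all $L^q$ norms on finite chaoses) yields, for every $q\ge 2$,
\begin{equation*}
  \EE|\XX^{i,j}(s,t)|^{q}\le C_{q}\bigl(\EE|\XX^{i,j}(s,t)|^2\bigr)^{q/2}\le C_{q,M}|t-s|^{q/\rho}.
\end{equation*}
Similarly $\EE|X^i(t)-X^i(s)|^q\le C_{q,M}|t-s|^{q/(2\rho)}$ since $X^i$ is Gaussian.

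Third, I would verify Chen's relation (\ref{chen}) in $L^2$: for the off-diagonal case this follows from the additivity of the Stieltjes approximants $\int_\pP(X^i(r)-X^i(s))\,dX^j(r)$ when one splits $[s,t]$ at $u$ and writes $X^i(r)-X^i(s)=(X^i(r)-X^i(u))+(X^i(u)-X^i(s))$; passing to the $L^2$-limit gives the relation. For the symmetrized and diagonal entries it is a direct algebraic check. Once Chen holds and the moment bounds above are in force, I would apply the Kolmogorov-type criterion for rough paths (Theorem 3.1 or Theorem 10.4 in \cite{MR3289027}): choosing $q$ so large that $\beta<\frac{1}{2\rho}-\frac{1}{q}$, one obtains a continuous modification of $(X,\XX)$ with
\begin{equation*}
  \ltn X\rtn_{\beta;[0,T]}+\sqrt{\ltn\XX\rtn_{2\beta;[0,T]}}<\infty \quad \text{a.s.}
\end{equation*}
for every $\beta\in(\tfrac{1}{3},\tfrac{1}{2\rho})$, so ${\bf X}=(X,\XX)\in\mathcal C^\beta([0,T],\RR^m)$.

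Finally, the geometric property comes for free from the chosen definitions: for $i<j$ the pair satisfies
\begin{equation*}
  \XX^{i,j}(s,t)+\XX^{j,i}(s,t)=(X^i(t)-X^i(s))(X^j(t)-X^j(s)),
\end{equation*}
and the diagonal entries equal $\tfrac12(X^i(t)-X^i(s))^2$, so $\mathrm{Sym}(\XX(s,t))=\tfrac12(X(t)-X(s))\otimes(X(t)-X(s))$. Hence ${\bf X}\in\mathcal C^\beta_g([0,T],\RR^m)$. The main obstacle is the second-moment estimate on $\XX^{i,j}$: this is exactly the content of Theorem \ref{t1}, which crucially uses the independence of $X^i$ and $X^j$ and the finite rectangular $\rho$-variation; everything afterwards is standard Gaussian chaos plus Kolmogorov.
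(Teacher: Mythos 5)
Your proposal is correct and follows essentially the same route as the paper, which does not prove this statement itself but simply cites Theorem 10.4 of \cite{MR3289027}; the standard proof given there is exactly your sketch (construct $\XX^{i,j}$ in $L^2$ via the two-dimensional Young/$\rho$-variation estimate of Theorem \ref{t1}, upgrade moments by equivalence of $L^q$-norms on the second Wiener chaos, check Chen's relation on Riemann--Stieltjes approximants, and conclude with the Kolmogorov criterion for rough paths, the geometric property being built into the definitions of $\XX^{i,i}$ and $\XX^{j,i}$). The only cosmetic remark is that you should invoke the Kolmogorov criterion (Theorem 3.1 of \cite{MR3289027}) rather than Theorem 10.4 itself, to avoid circularity.
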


For the proof of this result see Theorem 10.4 in \cite{MR3289027}.

\smallskip

The following result, that is an adaptation of Theorem 3.3 in \cite{MR3289027}, will allow us to measure the distance between two lifts. This theorem is a version of the well-known Kolmogorov test.

\begin{theorem}\label{t4}
Let $q\ge 1$, $1/\rho-1/q>2/3$
and ${\bf X}=(X,\XX)$ and ${\bf Y}=(Y,\YY)$ be two rough paths in $\RR^m$ such that for all $-T\leq s< t\leq T$
\begin{align*}
  &|X(t)-X(s)|_{L_{2q}}\le C|t-s|^{1/(2\rho)}, \qquad |\XX(s,t)|_{L_q}\le  C|t-s|^{1/\rho},\\
  &|Y(t)-Y(s)|_{L_{2q}}\le C|t-s|^{1/(2\rho)}, \qquad |\YY(s,t)|_{L_q}\le C|t-s|^{1/\rho},
\end{align*}
for some constant $C>0$. Assume that for some $\varepsilon>0$ and all  $-T\leq s< t\leq T$
\begin{align*}
  &|X(t)-X(s)-(Y(t)-Y(s))|_{L_{2q}}\le \varepsilon C|t-s|^{1/(2\rho)},\\
  &|\XX(s,t)-\YY(s,t)|_{L_q}\le \varepsilon C|t-s|^{1/\rho}.
\end{align*}
Let $\beta\in (\frac13,\frac{1}{2\rho}-\frac{1}{2q})$. Then $\ltn{\bf X}\rtn_\beta$, $\ltn \bf Y\rtn_\beta \in L_{2q}$
and
\begin{equation*}
  |\rho_{\beta,0,T}({\bf X},{\bf Y})|_{L_{2q}}\le \varepsilon M
\end{equation*}
where $M>0$ is independent on $\varepsilon$ and depends increasingly on $C$.
\end{theorem}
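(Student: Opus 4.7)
The strategy is a two-level Kolmogorov continuity argument: estimate the H\"older seminorms of the path-level difference $Z:=X-Y$ and of the area-level difference $\Xi:=\XX-\YY$ separately, then combine them via the triangle inequality. Both steps are applications of a Garsia-Rodemich-Rumsey (GRR) type inequality, and the smallness factor $\varepsilon$ is preserved because it enters every hypothesis linearly.

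At the path level, the hypothesis $|Z(t)-Z(s)|_{L_{2q}}\le\varepsilon C(t-s)^{1/(2\rho)}$ is the input of the classical GRR inequality with $\Psi(x)=x^{2q}$ and weight $u\mapsto u^{1/(2\rho)+1/(2q)}$, equivalently the Kolmogorov--Chentsov theorem in $L_{2q}$. This immediately yields, for every $\beta<1/(2\rho)-1/(2q)$,
\[\bigl|\ltn Z\rtn_\beta\bigr|_{L_{2q}}\le \varepsilon\,M_1(\beta,\rho,q,T,C),\]
with $M_1$ independent of $\varepsilon$ and increasing in $C$.

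At the area level the classical Kolmogorov theorem does not apply directly, because $\Xi$ is a function on the simplex, not on an interval. The substitute is the two-parameter GRR for processes on the simplex, the same tool that underlies Theorem~\ref{Th10.4} (Theorem~3.1 of \cite{MR3289027}). Its two ingredients are the direct $L_q$-bound $|\Xi(s,t)|_{L_q}\le\varepsilon C(t-s)^{1/\rho}$, which is a hypothesis, and control of the Chen defect
\[\Xi(s,t)-\Xi(s,u)-\Xi(u,t)=(X(u)-X(s))\otimes(X(t)-X(u))-(Y(u)-Y(s))\otimes(Y(t)-Y(u)),\]
whose right-hand side is $\varepsilon$-small in $L_q$: after inserting $\pm(X(u)-X(s))\otimes(Y(t)-Y(u))$ and applying Cauchy--Schwarz, each resulting summand pairs a small $L_{2q}$-factor coming from the hypothesis on $Z$ with a bounded $L_{2q}$-factor coming from the hypothesis on $X$ or $Y$. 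The two-parameter GRR then yields, for every $2\beta<1/\rho-2/q$,
\[\bigl|\ltn\Xi\rtn_{2\beta}\bigr|_{L_q}\le \varepsilon\,M_2.\]

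To conclude I add the two bounds and bridge the $L_q$-versus-$L_{2q}$ mismatch on the area by interpolating the small $L_q$-estimate on $\ltn\Xi\rtn_{2\beta}$ against the $\varepsilon$-free $L_q$-bound on $\ltn\XX\rtn_{2\beta}+\ltn\YY\rtn_{2\beta}$ (obtained by running the same GRR on $\mathbf{X}$ and $\mathbf{Y}$ individually) along the H\"older scale, using the slack between $\beta$ and $1/(2\rho)-1/(2q)$; this produces the claimed inequality $|\rho_{\beta,0,T}(\mathbf{X},\mathbf{Y})|_{L_{2q}}\le \varepsilon M$ with $M$ depending on $C$ but not on $\varepsilon$. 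The hard part is the area-level step, for two reasons: one must work with the two-parameter GRR on the simplex rather than with the familiar one-parameter Kolmogorov test, and the smallness factor $\varepsilon$ survives only because it enters \emph{both} the direct $L_q$-bound on $\Xi$ \emph{and} the Chen defect (via the $L_{2q}$-bound on $Z$) -- either estimate alone would be insufficient to propagate $\varepsilon$ into $\ltn\Xi\rtn_{2\beta}$.
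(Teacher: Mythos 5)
The paper does not actually prove this statement: it is quoted as an adaptation of Theorem 3.3 (together with Theorem 3.1) of \cite{MR3289027}, with only the remark that the argument extends from $[0,T]$ to $[-T,T]$. Your sketch reconstructs precisely the strategy of that cited proof: a one-parameter Kolmogorov/GRR bound for $Z=X-Y$, and a second-level chaining argument for $\Xi=\XX-\YY$ whose two inputs are the pointwise bound $|\Xi(s,t)|_{L_q}\le\varepsilon C|t-s|^{1/\rho}$ and the $\varepsilon$-smallness in $L_q$ of the Chen defect $\delta\Xi(s,u,t)=(X(u)-X(s))\otimes(X(t)-X(u))-(Y(u)-Y(s))\otimes(Y(t)-Y(u))$, obtained by inserting cross terms and applying Cauchy--Schwarz so that each summand pairs a small $L_{2q}$ factor with a bounded one. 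Identifying the Chen defect of $\Xi$ as the place where $\varepsilon$ must be propagated is exactly the essential point of the original argument, so on the substance your route is the same as the paper's (cited) one. (Do double-check the bookkeeping of the $1/q$-loss in the second-level step against the theorem's window $\beta<\tfrac{1}{2\rho}-\tfrac{1}{2q}$; your stated range $2\beta<1/\rho-2/q$ is more restrictive than what the theorem allows.)

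The one step I would not accept as written is the final ``interpolation along the H\"older scale'' used to convert the $L_q$-bound on $\ltn\Xi\rtn_{2\beta}$ into an $L_{2q}$-bound. Interpolating an $\varepsilon$-small seminorm against an $O(1)$ seminorm produces a bound of order $\varepsilon^\theta$ with $\theta<1$, so the linear dependence on $\varepsilon$ claimed in the conclusion is lost. In fact no argument from the stated hypotheses alone can upgrade the area difference from $L_q$ to $L_{2q}$, since the areas are only assumed to have $q$ moments; in \cite{MR3289027} the corresponding distance estimate is an $L_q$-estimate, and the $L_{2q}$ written here is harmless only because in this paper's application all quantities live in a fixed Wiener--It\^o chaos, where $L_q$- and $L_{2q}$-norms are equivalent. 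Drop the interpolation, state the distance estimate in $L_q$ (or invoke chaos equivalence of moments where the theorem is applied), and your argument matches the proof the paper relies on.
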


We would like to stress that Theorem 3.3 works for rough paths $(X,\XX)$ defined on $[-T,T]\times \Delta[-T,T]$ and not only for rough paths defined on  $[0,T]\times \Delta[-0,T]$ as proved in \cite{MR3289027}.\\

Further we will consider centered, continuous Gaussian processes with independent
components $X=(X^1,\cdots,X^m)$ and stationary increments. As we have seen above, for the cons\-truction
of a lift associated to $X$ it suffices to estimate the  $\rho$-variation of the rectangular incremental covariance $R_{X^i}$, for all $i\in \{1,\cdots, m\}$, see Theorem \ref{Th10.4}. To this end, it is enough to focus on one component, hence, in the next result $X$ denotes a one-dimensional path, whose law is determined by
\begin{align}\label{sig}
\sigma^2(\tau)=\EE (X(t+\tau)-X(t))^2
\end{align}
for $t\in\RR,\,\tau\ge 0$.

\begin{theorem} \label{Th10.9}
Let $X$ be a real-valued continuous centered Gaussian process with stationary increments. Assume that $\sigma^2(\cdot)$ given by (\ref{sig}) is concave and non--decreasing on $[0, h]$, for some $h > 0$. Assume also
\begin{equation}\label{si}
\sigma^2(t)\leq L t^{1/\rho}
\end{equation}
for $L>0,\rho \geq 1$ and $t\in [0,h]$. Then the incremental covariance  $R_X$ of $X$ satisfies
$$\|R_X\|_{\rho,[s,t]^2}\leq L M (t-s)^{1/\rho}$$
for all intervals $ [s, t]$ with length smaller than $h$ and  $M = M(\rho,h) > 0$.\\
If now $X=(X^1,\cdots,X^m)$ is a centered continuous Gaussian process
with independent components such that each $X^i$ satisfies the above assumptions with common values of $h, L$ and $\rho\in [1,\frac{3}{2})$, then $X$ can be lifted to ${\bf X} \in \mathcal C^\beta_g([0,T],\RR^m)$.
\end{theorem}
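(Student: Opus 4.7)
The plan is to establish the scalar variation bound first and then invoke Theorem \ref{Th10.4} to obtain the lift.

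First I would use the polarization identity $2\EE UV=\EE U^2+\EE V^2-\EE(U-V)^2$ together with stationarity of the increments to rewrite the rectangular incremental covariance purely in terms of $f:=\sigma^2$:
\begin{equation*}
2R\begin{pmatrix}\sigma & \tau \\ \sigma' & \tau'\end{pmatrix}=f(|\tau-\sigma'|)+f(|\sigma-\tau'|)-f(|\tau-\tau'|)-f(|\sigma-\sigma'|).
\end{equation*}
This reduces everything to a deterministic estimate on the scalar concave non-decreasing function $f$ with $f(0)=0$. Two consequences of concavity will be needed. The first is subadditivity of $f$ (which follows since $x\mapsto f(x)/x$ is non-increasing): combined with (\ref{si}), this yields the one-dimensional $\rho$-variation bound $\sum_i (f(s_{i+1})-f(s_i))^\rho\le\sum_i f(s_{i+1}-s_i)^\rho\le L^\rho(t-s)$ on any subinterval of $[0,h]$. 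The second is that $R\le 0$ whenever $[\sigma,\tau]$ and $[\sigma',\tau']$ are disjoint: if $\tau\le\sigma'$, introduce $\phi(y):=f(\tau'-y)-f(\sigma'-y)$; a direct calculation gives $|2R|=\phi(\tau)-\phi(\sigma)$, and $\phi$ is non-decreasing because $f'$ is non-increasing.

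Next I fix partitions $\pP, \pP'$ of $[s,t]$ with $t-s\le h$ and split the double sum $\sum_{[\sigma,\tau]\in\pP,\,[\sigma',\tau']\in\pP'}|R|^\rho$ according to whether the pair of intervals coincides, is disjoint, or overlaps properly. The diagonal contribution is bounded directly by $\sum_I f(|I|)^\rho\le L^\rho(t-s)$. For disjoint pairs I exploit the cancellation encoded in $|2R|=\phi(\tau)-\phi(\sigma)$: this is an increment of the non-decreasing function $\phi$, which can be controlled in $\rho$-variation using subadditivity of $f$ (so that $\phi(\tau)-\phi(\sigma)\le f(\tau-\sigma)$). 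Summing first over the $\pP$ direction for fixed $[\sigma',\tau']$ collapses these contributions to a $\rho$-variation of $f$ proportional to $L^\rho(t-s)$, and a further concavity argument prevents the summation over $\pP'$ from picking up the cardinality $|\pP'|$ as an extra factor; this is where the constant $M=M(\rho,h)$ enters. The overlapping case is reduced to the previous two by splitting both intervals at their common endpoints and invoking the triangle inequality for $|\cdot|^{1/\rho}$. Taking $\rho$-th roots supplies $\|R_X\|_{\rho,[s,t]^2}\le LM(t-s)^{1/\rho}$.

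The passage from $t-s\le h$ to arbitrary intervals $[s,t]\subset[0,T]$ is a routine concatenation: cover $[s,t]$ by at most $\lceil (t-s)/h\rceil$ pieces of length $\le h$, apply the bound in each piece and use superadditivity of 2D $\rho$-variation across disjoint product rectangles. Finally, for the multidimensional conclusion, each component $X^i$ satisfies the scalar hypotheses and hence $\|R_{X^i}\|_{\rho,[s,t]^2}\le M|t-s|^{1/\rho}$; since $\rho\in[1,3/2)$, Theorem \ref{Th10.4} immediately delivers a geometric rough path lift ${\bf X}\in\mathcal{C}^\beta_g([0,T],\RR^m)$ for every $\beta\in(1/3,1/(2\rho))$.

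\textbf{The main obstacle} is the book-keeping for the disjoint contribution in the middle paragraph: concavity produces cancellation across rectangles, but the $\rho$-th power is non-linear, so the partial sums along one partition must be organised \emph{before} taking the power in order to keep the bound proportional to $(t-s)$ rather than to $(t-s)^2$ or to the cardinalities of $\pP$ and $\pP'$.
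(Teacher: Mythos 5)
Your overall strategy coincides with that of the proof this paper actually relies on: the paper does not prove Theorem \ref{Th10.9} itself, it cites Theorem 10.9 and Corollary 10.10 of Friz--Hairer and only supplements the citation with the displayed inequality identifying the explicit constant $M$ in (\ref{m}). Your ingredients --- polarization plus stationarity to reduce $R$ to $f=\sigma^2$, concavity giving both subadditivity of $f$ and non-positivity of $R$ on disjoint intervals via the monotone function $\phi$, telescoping organised before the $\rho$-th power is taken, and Theorem \ref{Th10.4} for the lift --- are exactly the ingredients of that proof, and the concluding multidimensional step is fine.

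The problem is that the step you yourself single out as ``the main obstacle'' is left open, and the estimate you actually write down for the disjoint contribution would fail to close it. For a fixed $[\sigma',\tau']\in\pP'$, the left-disjoint intervals of $\pP$ telescope to $\phi(\tau_{\max})-\phi(\sigma_{\min})$, and if you bound this using your inequality $\phi(\tau)-\phi(\sigma)\le f(\tau-\sigma)$ you obtain $f(\text{length of the disjoint region})^\rho\le L^\rho(t-s)$, i.e.\ an inner sum proportional to $t-s$; the outer sum over $\pP'$ then produces exactly the factor $|\pP'|$ you were trying to avoid. The bound that is actually needed is $0\le\phi(y)=f(\tau'-y)-f(\sigma'-y)\le\phi(\sigma')=f(\tau'-\sigma')$ for $y\le\sigma'$ (monotonicity of $f$ together with monotonicity of $\phi$), so that the telescoped sum is at most $f(\tau'-\sigma')$ and its $\rho$-th power at most $L^\rho(\tau'-\sigma')$ --- proportional to the length of the \emph{fixed} interval; only then does the outer summation give $L^\rho(t-s)$. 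This is precisely what the inequality displayed after the theorem in the paper records, namely $\sum_{t'_j}\big|\EE(X(t_{i+1})-X(t_i))(X(t'_{j+1})-X(t'_j))\big|^\rho\le C L^\rho|t_{i+1}-t_i|$ for each fixed $i$. A second, smaller gap: your trichotomy (coincident / disjoint / properly overlapping, handled by splitting at common endpoints) omits the generic case of an interval of one partition strictly nested inside an interval of the other with no shared endpoint, for which splitting does nothing; these terms must again be treated by a sign decomposition ($|R|\le R+2R^{-}$, the negative part coming from disjoint sub-configurations), which is where the term $\frac{2}{3^{1-\rho}}\big(2\sigma^2(t_{i+1}-t_i)\big)^\rho$ in the paper's display originates. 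With these two repairs your argument reproduces the cited proof and the constant (\ref{m}).
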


For the proof of this result we  refer to Theorem 10.9 and Corollary 10.10 in \cite{MR3289027}. Note that revisiting the proof of Theorem 10.9 in \cite{MR3289027},  on account of (\ref{si}) it turns out that for an interval $[s,t]$ of length smaller than $h$, for $\Pi=\{t_i\}$ and $\Pi'=\{t_j'\}$ two partitions of $[s,t]$, if we fixed $i$ then
\begin{align*}
\sum_{t'_j\in \Pi'}& \Big|\EE(X(t_{i+1})-X(t_i))(X({t'_{j+1}})-X(t'_{j}))\Big|^\rho\\
\leq & \frac{2}{3^{1-\rho}} \big(2\sigma^2(t_{i+1}-t_i)\big)^\rho +\frac{L^\rho}{3^{1-\rho}}|t_{i+1}-t_i|
\leq   \Big(\frac{2^{1+\rho}+1}{3^{1-\rho}}  \Big) L^\rho |t_{i+1}-t_i|,
\end{align*}
hence summing over $t_i$ and taking the supremum we obtain
$$\|R_X\|_{\rho,[s,t]^2}\leq L \Big(\frac{2^{1+\rho}+1}{3^{1-\rho}}  \Big)^{1/\rho} (t-s)^{1/\rho}=:LM (t-s)^{1/\rho}$$
that is, the value of the constant $M$ above is given by
\begin{equation}\label{m}
M:= \Big(\frac{2^{1+\rho}+1}{3^{1-\rho}}  \Big)^{1/\rho}.
\end{equation}

\section{Approximation of the Brownian motion by a stationary process}\label{s3}

Let us consider the metric dynamical system $(\Omega,\fF,\PP_{\frac{1}{2}},\theta)$ introduced in Example \ref{ex1} in Section \ref{s1} and denote by $\omega:=B^{1/2}(\omega) \in \Omega$ the canonical Brownian motion on $\RR$ with values in $\RR^m$.

First of all, in this section we want to enhance $\omega$ to obtain the lift $\bm \om:=(\omega,\bbomega)$. Note that any of the components $\omega^i$ of $\omega$ is such that
$$\sigma^2_{\omega^i}(u)=u,$$
for $u\ge 0$, that obviously is non--decreasing and concave on any time interval $[0,T]$. Since $\rho\in (1,2]$ we can estimate
$$\sigma^2_{\omega^i}(u)=u\leq T^{1-1/\rho} u^{1/\rho},$$
hence, applying Theorem \ref{Th10.9} we obtain
\begin{align}\label{ro}
\|R_{\omega^i}\|_{\rho,[s,t]^2} \leq  T^{1-1/\rho} M |t-s|^{1/\rho}
\end{align}
with $M$ given by (\ref{m}). For $\bbomega^{i,j}(s,t)$ we choose the interpretation of the integral in \cite{MR3289027}, Chapter 10, namely
\begin{align}
\label{omij}
\bbomega^{i,j}(s,t):=\int_s^t (\omega^i(r)-\omega^i(s)) d\omega^j(r), \quad s\leq t\in\RR^+\quad\text{a.s.}
\end{align}

Despite the fact that the results of Section \ref{s2} are formulated in $\RR^+$, for our purposes we need all the lifts to be defined on $\RR$. Therefore, we extend the rough path $\bm \om$ to $[-T,T]$ so that Chen's equality holds, that is, for all $s<0<t$
\begin{align}\label{ne}
\bbomega^{i,j}(s,t):=\bbomega^{i,j}(s,0)+\bbomega^{i,j}(0,t)-\omega^j(t)\omega^i(s),
\end{align}
which means that we can restrict to check whether $\bbomega^{i,j}(s,0)$ is well-defined for $s<0$. Taking into account that
\begin{align}
\label{omijb}
\int_{\pP(s,0)} (\omega(r)-\omega(s)) \otimes d\omega(r)=\int_{\pP(0,-s)} \theta_s\omega(r) \otimes d\theta_s\omega(r)
\end{align}
and that $\theta_s\omega$ is a Brownian motion with the same incremental covariance as $\omega$, then the limit when $|\pP(s,0)|\to 0$ of (\ref{omijb}) exists in the $L_2$ sense, which in turn implies that $\bbomega(s,t)$ exists for eve\-ry $s<t\in \RR$ due to Theorem \ref{t1}.

Furthermore, by Theorem \ref{Th10.4} we can find a rough path $\bm\om:=(\omega,\bbomega)\in \mathcal C_g^\beta([-T, T],\RR^m)$ for any $T>0$.
Defining on a set of measure zero $\bbomega(s,t)=0$ we find a version such that $\bm\om\in \cC^{\beta}_g([-T,T],\RR^m)$ for all $\omega\in \Omega$.

Moreover, in virtue of Theorem \ref{Th10.4} we can also define a version
\begin{equation}\label{to2}
\theta_\tau\bm \om:=(\theta_\tau \omega,\theta_\tau \bbomega)\in\mathcal C_g^\beta([-T, T],\RR^m)\quad\text{for all }T>0,\, \text{ a.s.}
  \end{equation}
We emphasize that $\theta_\tau \bbomega$ represents the area of the path $\theta_\tau \omega$.\\

Now for $\delta \in (0,1]$ we define the approximations
\begin{equation}\label{eq5}
  \omega_\delta(t):=\frac{1}{\delta}\int_0^t\theta_r\omega(\delta)dr=\int_0^t\frac{\omega(\delta+r)-\omega(r)}{\delta}dr, \quad t\in\RR.
\end{equation}
We can interpret
\begin{equation*}
  (t,\omega)\mapsto \omega_\delta(t)
\end{equation*}
as a random process on $(\Omega,\fF,\PP_{\frac{1}{2}},\theta)$. Also, it is readily seen that
 \begin{equation*}
    \RR \times \Omega\ni (t,\omega)\mapsto \omega_\delta^\prime(t)=\frac{1}{\delta}\theta_t\omega(\delta)
 \end{equation*}
is well-defined, hence $\omega_\delta$ can be enhanced by defining the Riemann integral
\begin{equation}\label{eq3}
  \bbomega_\de(s,t):=\int_s^t (\omega_\delta(r)-\omega_\delta(s))\otimes \omega'_\delta(r)dr,\quad s\leq t\in\RR.
\end{equation}
Then
\begin{equation*}
  ((s,t),\omega)\to \bbomega_\de(s,t)
\end{equation*}
for $s\le t \in \RR$ is a random field on $(\Omega,\fF,\PP_{\frac{1}{2}},\theta)$. Moreover, it is straightforward to prove the corresponding Chen's relation (\ref{chen}), henceforth the lift $\bm \w_\de:=(\omega_\delta,  \bbomega_\de)$ is a rough path.

We also define
$$\theta_\tau\bbomega_\delta(s,t):=\int_s^t (\theta_\tau\omega_\delta(r)-\theta_\tau\omega_\delta(s))\otimes (\theta_\tau\omega_\delta)'(r)dr,\; \tau\in \RR, \; s\leq t\in\RR,\omega\in\Omega.$$
Then it is easy to see
\begin{align}\label{to1}
\theta_\tau \bbomega_{\delta} (s,t)=\bbomega_{\delta} (s+\tau,t+\tau).
\end{align}
Now we define
\begin{equation}\label{to}
  \theta_\tau \bm \om_{\delta}:=(\theta_\tau \om_\de,\theta_\tau \bbomega_\de),\; \tau\in \RR.
\end{equation}
It is an immediate exercise to check that $\theta_\tau \bm \om_{\delta}$ is a rough path on $[-T,T]\times \Delta[-T,T]$, for any $T>0$.\\

Also for $\delta \in (0,1]$ we define the random process
$$\RR \times \Omega\ni (t,\omega)\mapsto X_\delta(t,\omega):=\omega_\delta(t)-\omega(t)\in \RR^m.$$

\begin{lemma}\label{prop}
$X_\de$ and $\omega_\delta$ are centered Gaussian processes on $\RR$ with stationary increments.
\end{lemma}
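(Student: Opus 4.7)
The plan is to reduce the statement to two ingredients: (i) Riemann integrals of a centered Gaussian process yield centered Gaussian random variables, and (ii) the measure-preserving property of the Wiener shift, namely $\theta_h\omega \stackrel{d}{=}\omega$ under $\PP_{\frac{1}{2}}$ (item (2) of Definition~\ref{d1}). With these in hand, both the Gaussian/centered assertion and the stationarity of increments reduce to bookkeeping.

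\textbf{Step 1: Gaussianity and centering.} I would first observe that $\omega_\delta(t)$, defined by (\ref{eq5}), is a pathwise Riemann integral of the continuous $\RR^m$-valued process $r \mapsto \frac{1}{\delta}(\omega(\delta+r)-\omega(r))$, and hence arises as an $L_2$-limit of Riemann sums of the form $\frac{1}{\delta}\sum_k (\omega(\delta+r_k)-\omega(r_k))(r_{k+1}-r_k)$. Each such sum is a finite linear combination of evaluations of the centered Gaussian process $\omega$, hence itself centered Gaussian, and centered Gaussian vectors are closed under $L_2$-limits. Applied jointly to finitely many times $t_1,\ldots,t_n$ together with a further finite set of evaluations $\omega(s_1),\ldots,\omega(s_k)$, this shows that the vector $(\omega_\delta(t_1),\ldots,\omega_\delta(t_n),\omega(s_1),\ldots,\omega(s_k))$ is jointly centered Gaussian. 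In particular, $\omega_\delta$ is a centered Gaussian process, and $X_\delta=\omega_\delta-\omega$, being a linear image of such a joint Gaussian vector, is as well.

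\textbf{Step 2: Stationary increments.} Fix $h\in\RR$ and $s\le t$. The substitution $u=r-h$ in (\ref{eq5}), combined with the identity $\omega(u+h+\delta)-\omega(u+h)=\theta_h\omega(u+\delta)-\theta_h\omega(u)$ that follows from (\ref{ws}) (the boundary terms $\omega(h)$ cancel), gives
\begin{equation*}
\omega_\delta(t+h)-\omega_\delta(s+h)=\frac{1}{\delta}\int_s^t\bigl(\theta_h\omega(u+\delta)-\theta_h\omega(u)\bigr)\,du.
\end{equation*}
Thus the increment on the left is the same deterministic functional of $\theta_h\omega$ as $\omega_\delta(t)-\omega_\delta(s)$ is of $\omega$. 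Applying this simultaneously to any finite family of pairs $(s_i,t_i)$ and invoking the $\theta_h$-invariance of $\PP_{\frac{1}{2}}$ yields equality of the finite-dimensional distributions of the shifted and unshifted increment vectors, which is the stationarity of the increments of $\omega_\delta$. For $X_\delta$ I would repeat the argument, adding the companion identity $\omega(t+h)-\omega(s+h)=\theta_h\omega(t)-\theta_h\omega(s)$, so that the whole increment vector of $X_\delta$ at shifted times is again the same deterministic functional of $\theta_h\omega$ as the unshifted increment vector is of $\omega$.

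\textbf{Main obstacle.} I do not anticipate any substantial difficulty; the lemma is essentially a consequence of linearity of (\ref{eq5}) and the $\theta$-invariance of $\PP_{\frac{1}{2}}$. The only point that deserves some care is the joint $L_2$-convergence of Riemann sums needed in Step~1, which ensures that joint Gaussianity and centering of $(\omega_\delta(\cdot),\omega(\cdot))$ persist in the limit. This follows from the uniform continuity of $r\mapsto\omega(\delta+r)-\omega(r)$ on compact intervals for $\PP_{\frac{1}{2}}$-a.e.\ $\omega$, which makes the Riemann sums converge pathwise uniformly, a fortiori in $L_2$.
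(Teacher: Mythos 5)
Your proposal is correct and follows essentially the same route as the paper: the paper establishes Gaussianity by citing the standard result on Riemann integrals of continuous Gaussian processes (which your Step 1 proves directly via $L_2$-limits of Riemann sums), and its stationarity argument is exactly your Step 2, phrased as the pathwise identity $X_\delta(t+s,\omega)-X_\delta(t,\omega)=X_\delta(s,\theta_t\omega)$ combined with the $\theta$-invariance of $\PP_{\frac12}$.
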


\begin{proof}
Note that the integrand defining $\omega_\delta$ is continuous on any $[-T,T]$ for any fixed $\delta$ and any $T>0$. Also the second moments of the integrand
are uniformly bounded in $[-T,T]$. Then $\omega_\delta$ (and therefore $X_\delta$ as well) is a Gaussian process, see \cite{MR1619188}, Pages 91 and 297.

Next, we prove that $X_\de$ has stationary increments. A simply computation shows that, for $s,\,t\in \RR$,
\begin{align*}
X_\de(t+s, \om)-X_\de(t, \om)
=&\frac 1\de\int_t^{t+s}\theta_r\omega(\de)dr-(\omega(t+s)-\omega(t))\\
=&\frac 1\de\int_0^{s} \theta_r\theta_t\omega(\de) dr-\theta_t\omega(s)\\
=&X_\de(s,\theta_t\omega).
\end{align*}
This means that the increment $X_\de(t+s,\omega)-X_\de(t,\omega)$ has the same distribution as
$X_\de(s,\omega)$ due to the fact that $\theta_t$ is measure-preserving. The same holds for $\omega_\delta$.
\end{proof}

Our next goal is to check that $X_\delta$ can be enhanced, in such a way that we obtain a rough path $(X_\delta,\XX_\delta)$ on $[-T,T]\times \Delta[-T,T]$, for any $T>0$.
\begin{theorem}\label{lift}
The variance of any of the components $X_\delta^i$ of $X_\delta$ satisfies
\begin{equation*}
  \sigma_{X^i_\de}^2(u)\le  \delta^{1-1/\rho} u^{1/\rho},
\end{equation*}
for $u\geq 0,\,\delta\in (0,1]$, $\rho\in [1,2)$. As a result,
\begin{equation*}
  \|R_{X_\delta^i}\|_{\rho,[s,t]^2}\le  \delta^{1-1/\rho} M |t-s|^{1/\rho}
  \end{equation*}
  with $M$ defined by (\ref{m}).
\end{theorem}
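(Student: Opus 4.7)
Since the components of Brownian motion $\omega$ are independent, so are the components of $X_\delta$, and it suffices to prove everything for a single component; I drop the index $i$ throughout. Because $X_\delta$ has stationary increments by Lemma \ref{prop} and $X_\delta(0)=0$, we have $\sigma_{X_\delta}^2(u) = \mathbb{E}[X_\delta(u)^2]$ for $u \ge 0$, so the task reduces to controlling this second moment and then invoking Theorem \ref{Th10.9}.

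My plan is to establish the pointwise bound $\sigma_{X_\delta}^2(u) \le \min(u,\delta)$. Using the substitution $s=r+\delta$ in (\ref{eq5}) one rewrites $\delta\omega_\delta(u) = \int_\delta^{u+\delta}\omega(s)\,ds - \int_0^u\omega(r)\,dr$, so $\delta X_\delta(u) = \int_\delta^{u+\delta}\omega(s)\,ds - \int_0^u\omega(r)\,dr - \delta\omega(u)$. For $u \ge \delta$ this rearranges into $\int_u^{u+\delta}(\omega(s)-\omega(u))\,ds - \int_0^\delta\omega(r)\,dr$, a difference of \emph{independent} centred Gaussians each of variance $\delta^3/3$, yielding $\sigma_{X_\delta}^2(u) = 2\delta/3$. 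For $u \in [0,\delta]$ I would expand the square and evaluate the six resulting second moments using $\mathbb{E}[\omega(s)\omega(t)] = \min(s,t)$; because $u \le \delta$ all the $\min$'s in the cross-terms simplify, and the algebra collapses to $\sigma_{X_\delta}^2(u) = u - u^3/(3\delta^2)$. Both branches are dominated by $\min(u,\delta)$, and the elementary inequality $\min(v,1) \le v^{1/\rho}$, valid for $v \ge 0$ and $\rho \ge 1$, applied at $v = u/\delta$, yields the first claim $\sigma_{X_\delta}^2(u) \le \delta^{1-1/\rho}u^{1/\rho}$.

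For the $\rho$-variation bound I would apply Theorem \ref{Th10.9} with $L = \delta^{1-1/\rho}$. The explicit formula shows that $\sigma_{X_\delta}^2$ has derivative $1 - u^2/\delta^2 \ge 0$ on $[0,\delta]$ and $0$ on $[\delta,\infty)$ (matching at $u=\delta$), hence $\sigma_{X_\delta}^2$ is non-decreasing on $[0,\infty)$; its second derivative is $-2u/\delta^2 \le 0$ on $[0,\delta]$ and $0$ afterward, so the derivative is non-increasing and $\sigma_{X_\delta}^2$ is concave on $[0,\infty)$. Theorem \ref{Th10.9} then delivers the stated bound with $M$ as in (\ref{m}), and since that explicit $M$ does not depend on $h$ the bound extends to any interval $[s,t]$. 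The one delicate point is the exact computation for $u \le \delta$: the crude Cauchy--Schwarz estimate $\sigma_{X_\delta}^2(u) \le \frac{1}{\delta}\int_0^\delta 2\min(u,s)\,ds = 2u - u^2/\delta$ only yields a bound of order $2u$, which already fails the target inequality at $\rho = 1$, where $\delta^{1-1/\rho}u^{1/\rho} = u$. The factor of two must therefore be killed by the exact six-term expansion, and this is where the argument really uses the Brownian covariance rather than just the variance of the individual increments $\theta_u\omega(s) - \omega(s)$.
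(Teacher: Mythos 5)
Your proposal is correct and follows essentially the same route as the paper: obtain the explicit formula $\sigma_{X_\delta}^2(u)=u-u^3/(3\delta^2)$ for $u<\delta$ and $2\delta/3$ for $u\ge\delta$, verify monotonicity and concavity, deduce $\sigma_{X_\delta}^2(u)\le\delta^{1-1/\rho}u^{1/\rho}$, and invoke Theorem \ref{Th10.9} with the explicit constant (\ref{m}). The only difference is that the paper imports the variance formula from Corollary \ref{c2} (the Appendix's general fBm computation specialized to $H=1/2$), whereas you rederive it directly from the Brownian covariance --- your independence decomposition for $u\ge\delta$ is a clean shortcut, and your remark that a crude Cauchy--Schwarz bound of order $2u$ would fail at $\rho=1$ correctly identifies why the exact computation is needed.
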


\begin{proof}
We know from Corollary \ref{c2} in the \hyperref[appn]{Appendix}, that the variance $\sigma_{X^i_\de}^2$ of each component $X^i_\de$ is given by
\begin{equation}\label{equa1}
\sigma_{X^i_\de}^2(u)=\left\{
\begin{array}{lcr}
u-\frac{u^3}{3\delta^2}, &\quad& 0\le u<\delta,\\
\frac 2 3\de,&\quad& u\ge \delta,
\end{array}
\right.\end{equation}
for $1\le i\le m$. Obviously,  $\sigma_{X^i_\de}^2(u)$ is a continuous function and it is non-decreasing. Moreover,
$\sigma_{X^i_\delta}^2(\cdot)$ is concave and from (\ref{equa1}) it is easily seen that
\begin{equation*}
  \sigma_{X^i_\de}^2(u)\le Lu^{1/\rho},\; \text{ with } L:=L(\delta,\rho)=\delta^{1-1/\rho}
\end{equation*}
for $u\ge 0$. Furthermore, as a consequence of Theorem \ref{Th10.9} and the estimate (\ref{m}), we can derive the following explicit estimate for the $\rho$-variation norm of $R_{X^i_\de}$:
\begin{equation}\label{eR}
\|R_{X^i_\de}\|_{\rho,[s,t]^2}\le \delta^{1-1/\rho} \bigg(\frac{{2}^{\rho+1}+1}{3^{1-\rho}}\bigg)^{1/\rho} |t-s|^{1/\rho}.
\end{equation}
\end{proof}
\begin{corollary}\label{coro}
For $\delta \in (0,1]$ the area
\begin{equation*}
  \XX_{\delta}(s,t):=\int_s^t (X_{\delta}(r)-X_\delta(s))\otimes dX_{\delta}(r)
\end{equation*}
is well-defined for all $0\le s\le t$. In particular we have
\begin{equation*}
  |\XX_{\delta}^{i,j}(s,t)|_{L_2}^2 \le C \delta^{2(1-1/\rho)} M^2|t-s|^{2/\rho}
\end{equation*}
where $M$ has been defined in Theorem \ref{lift}, and $C$ only depends on $\rho$.
\end{corollary}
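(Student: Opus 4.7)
The plan is to combine the $\rho$-variation estimate on $R_{X_\delta^i}$ supplied by Theorem \ref{lift} with the Gaussian integration result Theorem \ref{t1}, componentwise on the $m\times m$ matrix $\XX_\delta(s,t)$. The argument splits naturally into the off-diagonal and the diagonal entries, exactly as in the construction used in Theorem \ref{Th10.4}.

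First I would record the structural properties of the components. By Lemma \ref{prop} each $X_\delta^i$ is a centered Gaussian process with stationary increments, and since the Brownian components $\omega^1,\ldots,\omega^m$ are independent and each $X_\delta^i$ is built only from $\omega^i$ via (\ref{eq5}), the processes $X_\delta^i$ and $X_\delta^j$ are independent whenever $i\neq j$. Theorem \ref{lift} then provides the key quantitative bound
\begin{equation*}
  \|R_{X_\delta^i}\|_{\rho,[s,t]^2}\le \delta^{1-1/\rho}\,M\,|t-s|^{1/\rho},\qquad i=1,\ldots,m.
\end{equation*}

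For the off-diagonal entries $i\neq j$, I would apply Theorem \ref{t1} with $X=X_\delta^j$ and $Y=X_\delta^i$. This immediately yields that
\begin{equation*}
  \XX_\delta^{i,j}(s,t):=\int_s^t(X_\delta^i(r)-X_\delta^i(s))\,dX_\delta^j(r)
\end{equation*}
exists as an $L_2$-limit of Riemann--Stieltjes sums and satisfies
\begin{equation*}
  \EE\bigl(\XX_\delta^{i,j}(s,t)\bigr)^2 \le C\,\|R_{X_\delta^i}\|_{\rho,[s,t]^2}\,\|R_{X_\delta^j}\|_{\rho,[s,t]^2} \le C\,\delta^{2(1-1/\rho)}\,M^2\,|t-s|^{2/\rho},
\end{equation*}
which is the desired bound with a constant depending only on $\rho$.

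For the diagonal case $i=j$, following the convention of Theorem \ref{Th10.4} I would set $\XX_\delta^{i,i}(s,t):=\tfrac12(X_\delta^i(t)-X_\delta^i(s))^2$. Since $X_\delta^i(t)-X_\delta^i(s)$ is a centered Gaussian random variable with variance $\sigma_{X_\delta^i}^2(t-s)$, the Gaussian fourth-moment identity gives
\begin{equation*}
  \EE\bigl(\XX_\delta^{i,i}(s,t)\bigr)^2=\tfrac34\,\sigma_{X_\delta^i}^4(t-s)\le \tfrac34\,\delta^{2(1-1/\rho)}\,|t-s|^{2/\rho},
\end{equation*}
using the pointwise bound on $\sigma_{X_\delta^i}^2$ from Theorem \ref{lift}. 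Taking the larger of the two constants yields a single $C=C(\rho)$ that works uniformly in $i,j$. No step presents a genuine obstacle; the only minor care required is to handle the diagonal separately, because Theorem \ref{t1} presupposes independence of the two Gaussian processes.
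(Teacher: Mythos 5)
Your proposal is correct and follows essentially the same route as the paper: off-diagonal entries via Theorem \ref{t1} combined with the $\rho$-variation bound (\ref{eR}), and the diagonal handled separately through the Gaussian fourth-moment identity applied to $\tfrac12(X_\delta^i(t)-X_\delta^i(s))^2$. The only cosmetic difference is that you compute the diagonal term exactly as $\tfrac34\sigma_{X_\delta^i}^4(t-s)$ while the paper bounds $\tfrac14\EE|X_\delta^i(t)-X_\delta^i(s)|^4$ by $3(\EE|X_\delta^i(t)-X_\delta^i(s)|^2)^2$; both yield the stated estimate since $M\ge 1$.
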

\begin{proof}
For $i\neq j$ the above statement follows directly from Theorem \ref{t1} and (\ref{eR}).

On the other hand, for $i=j$ note that, for all $0\le s\le t$,
\begin{align*}
  |\XX_{\delta}^{i,i}(s,t)|_{L_2}^2&\le \EE|X_{\delta}^i(t)-X_{\delta}^i(s)|^4\le 3(\EE|X_{\delta}^i(t)-X_{\delta}^i(s)|^2)^2\\
 & \le 3 \delta^{2(1-1/\rho)} M^2 |t-s|^{2/\rho}.
\end{align*}
\end{proof}

\begin{remark}\label{r1}
In fact, we can consider $\XX^{i,j}_\delta(s,t)$ for all $s<t\in \RR$. For $s\leq 0\leq t$ we define for $1\le i,\,j\le m$
\begin{equation}\label{ne1}
  \XX^{i,j}_\delta(s,t):=\XX^{i,j}_\delta(s,0)+\XX^{i,j}_\delta(0,t)-X_\delta^j(t)X_\delta^i(s),
\end{equation}
hence we only need to give meaning to $\XX_\delta^{i,j}(s,0)$. From Lemma \ref{prop} we know that $X_\delta$ has stationary increments, therefore the incremental
covariance of $X_\delta$ and $\theta_s X_\delta$ are the same. We then could follow exactly the same steps as we did to define $\bbomega(s,0)$ for $s<0$.

In addition, in virtue of Corollary \ref{coro} we have that, for any $1\leq i,j\leq m$ and $s\le 0\le t$,
\begin{align*}
  |\XX_\delta^{i,j}(s,t)|_{L_2}^2\leq &3 \Big(|\XX_\delta^{i,j}(s,0)|_{L_2}^2+|\XX_\delta^{i,j}(0,t)|_{L_2}^2+\big(\EE|X_\delta^i(s)|^4\EE|X_\delta^j(t)|^4\big)^{1/2}\Big)\\
  \le &C \delta^{2(1-1/\rho)} M^2 ((-s)^{2/\rho}+t^{2/\rho})\leq C \delta^{2(1-1/\rho)} M^2 (t-s)^{2/\rho} <\infty,
\end{align*}
where in the penultimate inequality we have used the fact that the mapping $(a,b) \mapsto |b-a|^\theta$ for $\theta\geq 1$ is a control function, see \cite{MR2604669}, Page 22. Note that the last term of the right hand side above can be estimate in this way because
$$\big(\EE|X_\delta^i(s)|^4\EE|X_\delta^j(t)|^4\big)^{1/2}\le C\delta^{2-2/\rho}(-s)^{1/\rho}t^{1/\rho}\le C\delta^{2-2/\rho}((-s)^{2/\rho}+t^{2/\rho}).
$$
Note that for $s<t\le 0$, by shifting the time we have $|\XX_\delta^{i,j}(s,t)|_{L_2}=|\XX_\delta^{i,j}(0,t-s)|_{L_2}$ which can be considered similarly to the case in Corollary \ref{coro}.
\end{remark}

\smallskip

\begin{theorem} Let $\delta\in (0,1]$, $\rho>1$ and $q>1$ so that $1/\rho-1/q>2/3$. Then for every $\beta\in (\frac13,\frac{1}{2\rho}-\frac{1}{2q})$, we have $\ltn {\bm \om} \rtn_\beta$, $\ltn {\bm \om_\delta} \rtn_\beta \in L_{2q}$. Furthermore, there exists a positive constant $C_{q,\rho,T}$ that may depend on $T$, $q$ and $\rho$ such that
 $$|\rho_{\beta,s,t}( \bm \om_\delta ,\bm \om)|_{L_{2q}}\le C_{q,\rho,T} \delta^{1/2(1-1/\rho)}.$$ Therefore,
\begin{align*}
\lim_{\delta \to 0}  \EE\big( \rho_{\beta,s,t}( \bm \om_\delta , \bm \om))^{2q} =0,
\end{align*}
for any $[s,t]\subset [-T,T]$.
\end{theorem}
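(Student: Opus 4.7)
The plan is to apply the rough-path Kolmogorov criterion (Theorem \ref{t4}) with $\varepsilon := \delta^{(1-1/\rho)/2}$. For this, I need the six ingredients demanded by that theorem: two-sided bounds on the Hölder-type moments of the increments of $\omega,\omega_\delta$ and of the areas $\bbomega,\bbomega_\delta$, together with the $\varepsilon$-small bounds on their differences. Throughout I will exploit that all the objects involved live in finite (in fact, first or second) Wiener chaoses over $\omega$, so that moment equivalence lets us promote every $L_2$-estimate to an $L_q$- or $L_{2q}$-estimate at the cost of a constant $C_q$.

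\textbf{Level-one bounds.} For $\omega$, since it is a standard Brownian motion, $|\omega(t)-\omega(s)|_{L_{2q}}\le C_q |t-s|^{1/2}\le C_{q,T}|t-s|^{1/(2\rho)}$. For the difference, $\omega_\delta(t)-\omega_\delta(s)-(\omega(t)-\omega(s)) = X_\delta(t)-X_\delta(s)$; Theorem \ref{lift} gives $\sigma^2_{X_\delta^i}(u)\le \delta^{1-1/\rho}u^{1/\rho}$, and Gaussian moment equivalence upgrades this to $|X_\delta(t)-X_\delta(s)|_{L_{2q}}\le C_q\delta^{(1-1/\rho)/2}|t-s|^{1/(2\rho)}=\varepsilon C_q|t-s|^{1/(2\rho)}$. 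The bound on $|\omega_\delta(t)-\omega_\delta(s)|_{L_{2q}}$ then follows by the triangle inequality.

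\textbf{Level-two bounds.} For $\bbomega$, Theorem \ref{t1} together with (\ref{ro}) yields $|\bbomega^{i,j}(s,t)|_{L_2}\le C_{T,\rho}|t-s|^{1/\rho}$, and second-chaos moment equivalence upgrades this to $L_q$. The core of the argument is the difference $\bbomega_\delta-\bbomega$. Writing $\omega_\delta = \omega + X_\delta$ and expanding the defining Riemann sums, one obtains the identity (holding in $L_2$-limit)
\begin{equation*}
\bbomega_\delta^{i,j}(s,t)-\bbomega^{i,j}(s,t)
= \int_s^t (\omega^i(r)-\omega^i(s))\,dX_\delta^j(r)
+ \int_s^t (X_\delta^i(r)-X_\delta^i(s))\,d\omega^j(r)
+ \XX_\delta^{i,j}(s,t).
\end{equation*}
For $i\neq j$, the two mixed integrals involve pairs of independent Gaussian processes (since the components of $\omega$ are independent and $X_\delta^k$ depends only on $\omega^k$); applying Theorem \ref{t1} with the $\rho$-variation bounds (\ref{ro}) and (\ref{eR}) yields an $L_2$-bound of order $\sqrt{T^{1-1/\rho}\delta^{1-1/\rho}}\,|t-s|^{1/\rho}\le C_{T,\rho}\,\varepsilon\,|t-s|^{1/\rho}$, and Corollary \ref{coro} bounds $\XX_\delta^{i,j}$ by $\delta^{1-1/\rho}|t-s|^{1/\rho}$, which is even smaller. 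For $i=j$, the geometric identities $\bbomega^{i,i}(s,t)=\tfrac12(\omega^i(t)-\omega^i(s))^2$ and $\bbomega_\delta^{i,i}(s,t)=\tfrac12(\omega_\delta^i(t)-\omega_\delta^i(s))^2$ reduce the difference to $\tfrac12(X_\delta^i(t)-X_\delta^i(s))(\omega_\delta^i(t)+\omega^i(t)-\omega_\delta^i(s)-\omega^i(s))$, which is estimated by Cauchy–Schwarz and the $L_4$-bounds of the previous paragraph. Moment equivalence in the second Wiener chaos then yields $|\bbomega_\delta(s,t)-\bbomega(s,t)|_{L_q}\le \varepsilon C_{q,T,\rho}|t-s|^{1/\rho}$.

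\textbf{Conclusion and main obstacle.} Feeding all six bounds into Theorem \ref{t4} gives the required $L_{2q}$-integrability of $\ltn\bm\om\rtn_\beta$ and $\ltn\bm\om_\delta\rtn_\beta$, together with $|\rho_{\beta,s,t}(\bm\om_\delta,\bm\om)|_{L_{2q}}\le C_{q,\rho,T}\,\delta^{(1-1/\rho)/2}$; letting $\delta\to 0$ yields the claimed convergence. The delicate point I expect is the justification of the decomposition of $\bbomega_\delta^{i,j}-\bbomega^{i,j}$: one must show that each of the four Riemann-sum pieces converges in $L_2$ (two of them being the well-defined objects $\bbomega^{i,j}$ and $\XX_\delta^{i,j}$, the other two being the mixed Wiener-type integrals made sense of via Theorem \ref{t1} thanks to the component-wise independence $\omega^i\perp\omega^j$ for $i\ne j$). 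Once this identification is in place, everything else is a combination of the Gaussian estimates supplied by Theorems \ref{Th10.9} and \ref{lift} and routine chaos-moment equivalence.
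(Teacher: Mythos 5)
Your proposal is correct and follows essentially the same route as the paper: the identical decomposition $\bbomega_\delta^{i,j}(s,t)-\bbomega^{i,j}(s,t)=\XX_\delta^{i,j}(s,t)+\int_s^t(X_\delta^i(r)-X_\delta^i(s))d\omega^j(r)+\int_s^t(\omega^i(r)-\omega^i(s))dX_\delta^j(r)$ estimated through Theorem \ref{t1} and the $\rho$-variation bounds, the same diagonal-case computation via the geometric identity and Cauchy--Schwarz, and Wiener-chaos moment equivalence feeding into Theorem \ref{t4} with $\varepsilon=\delta^{(1-1/\rho)/2}$. The only detail you leave out is the extension from $0\le s\le t$ to general $[s,t]\subset[-T,T]$, which the paper dispatches at the end using Chen's relation and the stationarity of the increments of $X_\delta$ and $\omega$.
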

\begin{proof}
Assume first that $0\le s\le t$. To prove this result we are going to apply the version of Kolmogorov's theorem, see Theorem \ref{t4}, hence we have to check the corresponding moment conditions for $\bm \om_\delta$, $\bm \om$ and its difference (with a uniform constant that does not depend on $\delta$).

Throughout the proof $C$ is a constant that may depend on $\rho$ and/or $T$ but not on $\delta$, and may change from line to line. We will emphasize the dependence of these values when we think it is suitable to do it.

Notice that for $i\neq j$ we can consider the following splitting
\begin{align*}
\XX^{i,j}_\de(s,t)&=\int_s^t [(\omega^i_\de-\omega^i)(r)-(\omega^i_\de-\omega^i)(s)]d (\omega^j_\de-\omega^j)(r)\\
&=\int_s^t [\omega^i_\de(r)-\omega^i_\de(s)]d \omega^j_\de(r)-\int_s^t [\omega^i(r)-\omega^i(s)]d \omega^j_\de(r)\\
&-\int_s^t [(\omega^i_\de-\omega^i)(r)-(\omega^i_\de-\omega^i)(s)]d \omega^j(r)\\
&=: \bbomega_\de^{i,j}(s,t)-I^{1,i,j}_\delta(s,t)-I^{2,i,j}_\de(s,t).
\end{align*}
The integral $I^{1,i,j}_\de$ can be rewritten as
\begin{align*}
I^{1,i,j}_\de&=\int_s^t [\omega^i(r)-\omega^i(s)]d \omega^j(r)+\int_s^t [\omega^i(r)-\omega^i(s)]d (\omega^j_\delta-\omega^j)(r)\\
&=:\bbomega^{i,j}(s,t)+I^{3,i,j}_\delta(s,t).
\end{align*}
Hence, from the previous considerations we obtain that
\begin{align}\label{equal}
\bbomega_\de^{i,j}(s,t)-\bbomega^{i,j}(s,t)=\XX^{i,j}_\de(s,t)+I^{2,i,j}_\delta(s,t)+I^{3,i,j}_\de(s,t).
\end{align}
According to Theorem \ref{t1} we can estimate the second moments of $I^{2,i,j}_\delta$ and $I^{3,i,j}_\de$  by the
respective variation seminorms of the incremental covariances. In fact,
\begin{align}\label{i2}
\begin{split}
\EE|I^{2,i,j}_\de(s,t)|^2 &=\EE \bigg( \bigg|\int_s^t [X_\delta^i(r)-X_\delta^i(s)]d \omega^j(r)\bigg|^2\bigg)\\
&\leq C \|R_{X_\de^i}\|_{\rho,[s,t]^2}\|R_{\omega^j}\|_{\rho,[s,t]^2}
\end{split}
\end{align}
and
\begin{align}\label{i3}
\begin{split}
\EE|I^{3,i,j}_\de(s,t)|^2 &=\EE \bigg( \bigg|\int_s^t [\omega^i(r)-\omega^i(s)]d X_\delta^j(r)\bigg|^2\bigg)\\
&\leq C \|R_{\omega^i}\|_{\rho,[s,t]^2} \|R_{X_\de^j}\|_{\rho,[s,t]^2}.
\end{split}
\end{align}

Note that (\ref{eR}) and (\ref{ro}) together with (\ref{i2}) and (\ref{i3}) imply that
\begin{align*}
\EE|I^{k,i,j}_\de(s,t)|^{2} \leq  C \delta^{1-1/\rho} T^{1-1/\rho} M^2 |t-s|^{2/\rho },
\end{align*}
for $k=2,3$. Going back to (\ref{equal}), taking into account Corollary \ref{coro}, we obtain that
\begin{align}\begin{split}\label{dif}
\EE| \bbomega_\de^{i,j}(s,t)-\bbomega^{i,j}(s,t)|^2 &\le  3 \EE|\XX^{i,j}_\de(s,t)|^2+3\EE|I^{2,i,j}_\delta(s,t)|^2+3\EE|I^{3,i,j}_\de(s,t)|^2 \\
&\leq C M^2 \delta^{1-1/\rho} (\delta^{1-1/\rho}+T^{1-1/\rho}) |t-s|^{2/\rho}\\
&\leq C M^2 \delta^{1-1/\rho} (1+T^{1-1/\rho}) |t-s|^{2/\rho}.
\end{split}
\end{align}

In fact (\ref{dif}) can be rewritten as
\begin{align}\begin{split}\label{dif1}
| \bbomega_\de^{i,j}(s,t)-\bbomega^{i,j}(s,t)|_{L_2} &\le  C \varepsilon |t-s|^{1/\rho},
\end{split}
\end{align}
where $\varepsilon:=\delta^{1/2(1/-1/\rho)}$ and $C$ above is a constant that depends on $T$ and $\rho$, but not on $\delta$.

Moreover, for $i\neq j$, on account of Theorem \ref{t1} and (\ref{ro}), we obtain
\begin{align*}
\EE|\bbomega^{i,j}(s,t)|^2 &\le C T^{2(1-1/\rho)} M^2 |t-s|^{2/\rho}.
\end{align*}
As a result of the previous inequality and (\ref{dif}) we also obtain the corresponding estimate for $ \bbomega_\de$, namely
\begin{align*}
\EE| \bbomega_\de^{i,j}(s,t)|^2 &\le  2\EE| \bbomega_\de^{i,j}(s,t)-\bbomega^{i,j}(s,t)|^2+2\EE| \bbomega^{i,j}(s,t)|^2\\
&\leq C M^2 \Big(\delta^{2(1-1/\rho)}+\delta^{1-1/\rho} T^{1-1/\rho}+T^{2(1-1/\rho)}\Big) |t-s|^{2/\rho},
\end{align*}
and therefore
\begin{align}\label{dif2}
|\bbomega^{i,j}(s,t)|_{L_2} \le C |t-s|^{1/\rho},\quad  |\bbomega_\de^{i,j}(s,t)|_{L_2} \le C |t-s|^{1/\rho}.
\end{align}

\bigskip

On the other hand, since $\omega^i(t)-\omega^i(s)$ and $\omega_\delta^i(t)-\omega^i_\delta(s)$ are Gaussian variables, see Lemma \ref{prop}, taking into account that $\omega_\de^i(t)=X_\de^i(t)+\omega^i(t)$ we can estimate
\begin{align*}
   \EE|\omega_\de^i(t)&-\omega_\de^i(s)+\omega^i(t)-\omega^i(s)|^4\le 8\EE|\omega_\de^i(t)-\omega_\de^i(s)|^4+8\EE|\omega^i(t)-\omega^i(s)|^4\\
   \le& 24(\EE|\omega_\de^i(t)-\omega_\de^i(s)|^2)^2+24(\EE|\omega^i(t)-\omega^i(s)|^2)^2\\
    \le& 96(\EE|X_\de^i(t)-X_\de^i(s)|^2+\EE|\omega^i(t)-\omega^i(s)|^2)^2+24(\EE|\omega^i(t)-\omega^i(s)|^2)^2\\
    \leq &C |t-s|^{2/\rho},
\end{align*}
where $C$ is uniform with respect to $\delta\in (0,1]$, and depends on $T$ and $\rho$. Note now that, by a direct computation, when $i=j$ we obtain
\begin{align*}
\EE(\bbomega_\de^{i,i}(s,t)-\bbomega^{i,i}(s,t))^2&=\frac{1}{4}\EE\bigg((\omega_\de^i(t)-\omega_\de^i(s))^2-(\omega^i(t)-\omega^i(s))^2\bigg)^2\\
&=\frac{1}{4}\EE\bigg((\omega_\de^i(t)-\omega_\de^i(s)+\omega^i(t)-\omega^i(s))(X_\de^i(t)-X_\de^i(s))\bigg)^2\\
&\le \frac14 \bigg(\EE|(\omega_\de^i(t)-\omega_\de^i(s)+\omega^i(t)-\omega^i(s)|^4\EE|(X_\de^i(t)-X_\de^i(s)|^4\bigg)^{1/2}
\end{align*}
and therefore
\begin{align}\label{dif3}
|\bbomega_\de^{i,i}(s,t)-\bbomega^{i,i}(s,t)|_{L_2} \leq C \varepsilon |t-s|^{1/\rho},
\end{align}
where we remind that  $ \varepsilon=\delta^{1/2(1/-1/\rho)}$.
According to \eqref{ro} we obtain
\begin{align}\label{dif4}
|\bbomega^{i,i}(s,t)|_{L_2} \le C |t-s|^{1/\rho}.
\end{align}
Again, by using triangle inequality
\begin{align}\label{dif5}
| \bbomega_\de^{i,i}(s,t)|_{L_2} \le C |t-s|^{1/\rho}
\end{align}
and therefore, taking into account (\ref{dif1})-(\ref{dif5}), all the moment conditions for the areas are satisfied in the particular case that $q=2$ (see Theorem \ref{t4}).

Now by the equivalence of $L_q$- and $L_2$-norms on Wiener--Ito chaos, see \cite{MR2604669}, Appendix D, for every $q>1$ we have
\begin{align*}
|\bbomega(s,t)|_{L_q} &\le C_{q,\rho,T} |t-s|^{1/\rho},\\
|\bbomega_\de(s,t)|_{L_q} &\le C_{q,\rho,T} |t-s|^{1/\rho},\\
  |\bbomega_\de(s,t)-\bbomega(s,t)|_{L_q}&\le \varepsilon C_{q,\rho,T}|t-s|^{1/\rho}.
\end{align*}
On the other hand, by Theorem \ref{lift}
\begin{equation*}
  |\omega_\delta(t)-\omega_\delta(s)-(\omega(t)-\omega(s))|_{L_{2q}}=|X_\delta(t)-X_\delta(s)|_{L_{2q}}\le \varepsilon C_{q,\rho,T}|t-s|^{1/(2\rho)},
\end{equation*}
and by (\ref{ro})
\begin{align*}
 |\omega(t)-\omega(s)|_{L_{2q}} \leq C_{q,\rho,T} |t-s|^{1/(2\rho)}.
\end{align*}
These last two inequalities trivially imply that
\begin{align*}
 |\omega_\delta(t)-\omega_\delta(s)|_{L_{2q}} \leq C_{q,\rho,T} |t-s|^{1/(2\rho)},
\end{align*}
see also Lemma \ref{l2} in the \hyperref[appn]{Appendix}.

Now for every $\beta\in (1/3,1/2)$ we can find $\rho>1$ and $q>1$ so that $1/\rho-1/q>2/3$ and Theorem \ref{t4} can be applied, ensuring that $\ltn {\bm \om} \rtn_\beta$, $\ltn {\bm \om_\delta} \rtn_\beta \in L_{2q}$ and
\begin{align}\label{ee}
|\rho_{\beta,s,t}( \bm \om_\delta ,\bm \om)|_{L_{2q}}\le C_{q,\rho,T} \delta^{1/2(1/-1/\rho)}.
\end{align}
Hence,
\begin{equation*}
\lim_{\delta \to 0}  |\rho_{\beta,s,t}( \bm \om_\delta,\bm \om)|_{L_{2q}}=0.
\end{equation*}
So far, we have considered only the case $0\leq s\leq t$, but the other cases are qualitatively the same. In fact, if $s\leq 0 \leq t$ by (\ref{ne}) and (\ref{ne1}) we should only estimate the second moments of $I^{2,i,j}_\delta$ and $I^{3,i,j}_\de$ defined above. For instance, for $I^{2,i,j}_\de$ we observe
$$I^{2,i,j}_\de(s,t)=\int_s^0 [X_\delta^i(r)-X_\delta^i(s)]d \omega^j(r)+\int_0^t X_\delta^i(r)d \omega^j(r)
-X_\delta^i(s)\omega^j(t),$$
that is,
$$I^{2,i,j}_\de(s,t)=I^{2,i,j}_\de(s,0)+I^{2,i,j}_\de(0,t)-X_\delta^i(s)\omega^j(t),$$
and the second moments of this splitting can be estimated using Theorem \ref{t1} and the fact that the increments of $X_\delta$ and $\omega$ are stationary.

The case $s< t \leq 0$ could treated taking into account that $|\XX_\delta^{i,j}(s,t)|_{L_2}=|\XX_\delta^{i,j}(0,t-s)|_{L_2}$, therefore we omit it here.
\end{proof}


\begin{theorem}\label{l3}
Consider a sequence $(\delta_i)_{i\in\NN}$ that converges sufficiently fast to zero when $i\to \infty$. Then we have the following convergences
\begin{align}\begin{split}\label{conv}
\lim_{i\to\infty}&\omega_{\delta_i}=\omega\,\text{ in } C^\beta([-T,T],\RR^m),\\
  \lim_{i\to\infty}&\bbomega_{\delta_i}=\bbomega\, \text{ in }   C^{2\beta}(\Delta[-T,T],\RR^{m\times m})
  \end{split}
\end{align}
for every $T>0$, for all $\omega \in \Omega$ and $\beta<1/2$.
\end{theorem}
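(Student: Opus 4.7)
The H\"older rough-path metric $\rho_{\beta,-T,T}(\bm\om_\delta,\bm\om)$ dominates both the path seminorm $\ltn\omega_\delta-\omega\rtn_{\beta}$ on $[-T,T]$ and the area seminorm $\ltn\bbomega_\delta-\bbomega\rtn_{2\beta}$ on $\Delta[-T,T]$. Since $\omega_\delta(0)=\omega(0)=0$, seminorm convergence upgrades to $C^\beta$-norm convergence. Hence the two convergences in (\ref{conv}) both reduce to the single pathwise statement
$$\lim_{i\to\infty}\rho_{\beta,-T,T}(\bm\om_{\delta_i},\bm\om)=0 \quad \text{a.s.}$$
My plan is to promote the $L_{2q}$ bound (\ref{ee}) to an almost sure statement along a sufficiently rapidly decreasing sequence $\delta_i$ via a standard Borel--Cantelli/Fubini argument, and then make the null set uniform over all $(T,\beta)$ by a countable exhaustion.

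\textbf{Parameter choice and moment bound.} Fix $T>0$ and $\beta<1/2$. Since the upper bound $\frac{1}{2\rho}-\frac{1}{2q}$ tends to $1/2$ as $\rho\downarrow 1$ and $q\uparrow\infty$, we can choose $\rho>1$ and $q>1$ satisfying simultaneously $\frac{1}{\rho}-\frac{1}{q}>\frac{2}{3}$ and $\beta<\frac{1}{2\rho}-\frac{1}{2q}$, so that the preceding theorem applies. Raising (\ref{ee}) to the $2q$-th power yields
$$\EE\bigl[\rho_{\beta,-T,T}(\bm\om_\delta,\bm\om)^{2q}\bigr]\le C_{q,\rho,T}^{2q}\,\delta^{\alpha},\qquad \alpha:=q\bigl(1-1/\rho\bigr)>0.$$

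\textbf{Borel--Cantelli step.} Choose $(\delta_i)_{i\in\NN}$ decreasing fast enough that $\sum_i\delta_i^\alpha<\infty$; for instance $\delta_i=2^{-i}$ works for any admissible $\rho,q$. By Fubini--Tonelli,
$$\EE\sum_{i=1}^\infty\rho_{\beta,-T,T}(\bm\om_{\delta_i},\bm\om)^{2q}=\sum_{i=1}^\infty\EE\bigl[\rho_{\beta,-T,T}(\bm\om_{\delta_i},\bm\om)^{2q}\bigr]<\infty,$$
so the series is almost surely finite, and in particular $\rho_{\beta,-T,T}(\bm\om_{\delta_i},\bm\om)\to 0$ almost surely as $i\to\infty$.

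\textbf{Uniformity in $(T,\beta)$ and conclusion.} To obtain the conclusions simultaneously for every $T>0$ and every $\beta<1/2$, I carry out the previous argument along a countable exhaustion $(T_k,\beta_k)_{k\in\NN}$ with $T_k\uparrow\infty$ and $\beta_k\uparrow 1/2$ (convergence in the $(T_k,\beta_k)$-H\"older norm implies convergence in every coarser H\"older norm on every $[-T,T]\subset[-T_k,T_k]$). A single sequence $\delta_i=2^{-i}$ is ``sufficiently fast'' for all $k$ at once, since $\delta_i^{\alpha_k}$ is summable for each $\alpha_k>0$. The union of the countably many exceptional sets is still a $\PP_{1/2}$-null, $\theta$-invariant set; after removing it from $\Omega$ (and keeping the same symbol for the remaining full-measure invariant set, as in Example \ref{ex1}), the convergence (\ref{conv}) holds for every $\omega\in\Omega$. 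The argument is essentially routine once the moment estimate (\ref{ee}) is in hand; the only mildly delicate point is the coupled choice of $(\rho,q)$ that accommodates $\beta$ arbitrarily close to $1/2$, and this is exactly what the limit $\frac{1}{2\rho}-\frac{1}{2q}\to\frac{1}{2}$ provides.
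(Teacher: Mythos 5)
Your reduction of the two convergences to $\rho_{\beta,-T,T}(\bm\om_{\delta_i},\bm\om)\to 0$, the choice of $(\rho,q)$, and the first-moment Borel--Cantelli step (summing the expectations via Fubini rather than the paper's Chebyshev-plus-Borel--Cantelli, which is an equivalent and equally valid route) all match the paper's argument in substance, with $\delta_i=2^{-i}$ and a countable exhaustion in $T=n\in\NN$.

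However, there is a genuine gap in your final step: you assert that ``the union of the countably many exceptional sets is still a $\PP_{1/2}$-null, $\theta$-invariant set.'' Nullity is immediate, but $\theta$-invariance is not: $\theta$ is indexed by the uncountable parameter $\tau\in\RR$, and nothing in the Borel--Cantelli construction forces the exceptional set to be stable under all these shifts. This is precisely the part of the proof on which the paper spends most of its effort. The paper introduces, for each $\tau\in\RR$, the set $\Omega^\tau$ of $\omega$ for which $(\theta_\tau\bm\om_{\delta_i})_i$ is Cauchy in $\cC^\beta([-n,n],\RR^m)$ for every $n$, verifies its measurability (via restriction of the supremum to rationals), shows it has full measure (it contains the set produced by Borel--Cantelli applied to $\theta_\tau\omega$), and then uses the explicit shift identity $\theta_\tau\bbomega_{\delta_i}(s,t)=\bbomega_{\delta_i}(s+\tau,t+\tau)$ from \eqref{to1} to prove $\Omega^{\tau+q}=\Omega^\tau$ for all $q$, whence $\theta_{-q}\Omega^0=\Omega^q=\Omega^0$ and $\Omega^0$ is $\theta$-invariant; only then does redefining $\omega\equiv 0$ on the complement yield the statement ``for all $\omega\in\Omega$'' while preserving the metric dynamical system structure needed for the RDS results of the next section. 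To close your argument you would need to supply this invariance argument (or an equivalent one); as written, the claim is unsupported and the conclusion ``for all $\omega\in\Omega$'' on a $\theta$-invariant $\Omega$ does not follow.
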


\begin{proof}
The assumption that the sequence $(\delta_i)_{i\in\NN}$ converges sufficiently fast to zero has the meaning that we can apply the Borel-Cantelli lemma, therefore we will restrict here to consider $\delta_i=2^{-i}$. In addition, it is sufficient to fix an arbitrary $T=n\in\NN$.

Taking into account (\ref{ee}), for every $\beta\in (1/3,1/2)$ we can find $\rho>1$ and $q>1$ so that $1/\rho-1/q>2/3$ and Theorem \ref{t4} can be applied, hence there exists a positive constant $C_n$ such that
\begin{align}\label{t2a1}
\begin{split}
 \EE\big(\rho_{\beta,-n,n} &( \bm \om_{\delta_i} ,\bm \om)\big)^{2q} \le  C_{q,\rho,n} \delta_i ^{q(1-1/\rho)}= C_{q,\rho,n} 2^{iq(1/\rho-1)}\to 0,
 \end{split}
\end{align}
as $i\to \infty$.
Now, by the Chebyshev inequality and \eqref{t2a1}, we have  that
\begin{align*}
\mathbb P_{\frac 12}\big(\omega: \rho_{\beta,-n, n}(\bm \om_{\delta_i} ,\bm \om)>2^{i/2(1/\rho-1)}\big)&\le \frac{1}{2^{iq/2(1/\rho-1)}} \EE\big(\rho_{\beta,-n,n} ( \bm \om_{\delta_i} -\bm \om)\big)^{2q}\\
&\le  C_{q,\rho,n} 2^{iq/2(1/\rho-1)}.
\end{align*}
By Borel-Cantelli lemma, we conclude that there exist a set $\Om^{(n)}\subset  \Omega$ of full $\mathbb P_{\frac 12}$--measure and $i_0=i_0(\omega,n)\ge 1$ such that for every $\omega\in \Om^{(n)}$,
\begin{equation*}
\rho_{\beta,-n, n}( \bm \om_{\delta_i} ,\bm \om)\le 2^{i/2(1/\rho-1)}
\end{equation*}
for $i\ge i_0$.
Now, taking $\hat\Om{^0}=\bigcap\limits_{n\ge 1} \Om^{(n)}$, we have $\mathbb P_{\frac 12}(\hat\Om{^0})=1$.
Replacing $\omega$ by $\theta_\tau\omega$ we can introduce the full set $\hat\Omega^\tau,\,\tau\in\RR$.\\

In order to find an invariant set $\Om'$ where the convergences (\ref{conv}) take place, let $\Omega^\tau$ be the measurable set with respect to the sigma-algebra $\fF$ of all $\omega$ such that
$(\theta_\tau \bm\om_{\delta_i})_{i\in\NN}$ converges in
$\cC^\beta([-n,n],\RR^m)$ for any $n\in\NN$, i.e. the sequence forms a Cauchy sequence for any $n$.  The measurability of $\Omega^\tau$ follows because
\begin{align*}
  &\sup_{-n\le s<t\le n}\frac{|\theta_\tau\bbomega_{\delta_i}(s,t)|}{|t-s|^{2\beta}}=\sup_{\{-n\le s<t\le n\}\cap \QQ}\frac{|\theta_\tau\bbomega_{\delta_i}(s,t)|}{|t-s|^{2\beta}},\,\text{and}\\
  &C_0(\RR,\RR^{m})\ni\omega\mapsto \theta_\tau\bbomega_{\delta_i}(s,t)
\end{align*}
is measurable. For the path component we can argue in a similar way, hence we only consider the area component. In addition the set of $\omega$ for which a sequence of random variables is a Cauchy sequence is measurable. The sets $\Omega^\tau$ contain $\hat\Omega^\tau$, hence $\Omega^\tau$ are sets of full measure.
Furthermore, if $\omega\in \Omega^\tau$, the limit is in
$\cC^\beta([-n,n],\RR^m)$ for $n\in\NN$ and an indistinguible version of
$\theta_\tau\bm\om$ is given by \eqref{to2}, which follows by the Borel-Cantelli argument as above.
On the zero-measure set ${(\Omega^\tau)}^c$ we set $\omega\equiv 0$ such that the corresponding $\bm\om$ is the zero path. We denote this new version by  the same symbols:
$\theta_\tau\bm\om=(\theta_\tau\omega,\theta_\tau\bbomega)$.
In virtue of (\ref{to1}), for any $q,\,\tau \in \RR$,
$$\theta_{\tau+q}\bbomega_{\delta_i}(s,t)=\theta_\tau\bbomega_{\delta_i}(s+q,t+q),$$
and thus, taking limits we obtain
$$\theta_{\tau+q}\bbomega(s,t)=\theta_\tau\bbomega(s+q,t+q)$$
for all $s<t\in \RR$, such that $\Omega^\tau \subset \Omega^{\tau+q}$. In a similar way,
\begin{equation*}
  \theta_{\tau}\bbomega_{\delta_i}(s,t)=\theta_{\tau+q-q}\bbomega_{\delta_i}(s,t)=\theta_{\tau+q}\bbomega_{\delta_i}(s-q,t-q)
\end{equation*}
which implies that $\Omega^\tau \supset \Omega^{\tau+q}$, and therefore taking $\tau=0$
$$\Omega^q=\Omega^0.$$ Then  for $q\in\RR$
\begin{eqnarray*}
  \theta_{-q}\Omega^0 &=& \theta_q^{-1}\Omega^0 \\
   &=& \{\omega\in\Omega:\theta_{q}\bbomega_{\delta_i}\;\text{converges for }n\in\NN\,\text{with metric }\rho_{\beta,-n,n}\}=\Omega^q=\Omega^0,
\end{eqnarray*}
hence $\Omega^\prime:=\Omega^0$ is $\theta$ invariant. We have defined $\omega\equiv 0$ and $\bm\om$ to be the zero path, hence the convergences of the statement holds true for all $\omega \in \Omega$.

\end{proof}

\section{Rough path stability of the random dynamical systems}

In this section we present the main result of this paper, concerning the convergence when $\delta \to 0$ of the solution process of a differential equation driven by $\bm \om_\delta$ to the solution process of the rough equation driven by $\bm \om$.  We will present this convergence result under the cocycle formulation.\\

As we already mentioned, we use the rough path theory to define the integral with integrator $\bm \om$. We recall therefore some of the basic facts regarding this theory, namely the definition of a controlled rough path and the sewing lemma.

\begin{definition} For $\beta \in (1/3,1/2)$, given $\omega \in C^\beta([0, T], \RR^m)$ we say that $Y\in C^\beta([0, T], \RR^d)$
is controlled by $\w$ if there exist $Y'\in C^\beta([0, T], \RR^{d\times m})$ and $R^Y \in C^{2\beta}(\Delta[0, T], \RR^{d})$ such that
\begin{equation*}
Y(t)-Y(s)=Y'(s)(\omega(t)-\omega(s))+R^Y(s,t),
\end{equation*}
for $0\leq s\leq t\leq T$. $Y'$ is known as the Gubinelli derivative of $Y$ and $R^Y$ is the remainder term.
\end{definition}
We introduce the space of controlled rough paths
$$(Y, Y')\in  \mathscr D_\om^{2\beta}([0, T], \RR^d),$$
with the norm
$$\|Y, Y'\|_{\w, \beta}:=\ltn Y'\rtn_\beta+\ltn R^Y\rtn_{2\beta}+|Y(0)|+|Y^\prime(0)|.$$
Then $ (\mathscr D_\om^{2\beta}([0, T], \RR^{d}),\|\cdot\|_{\w,\beta})$ is a Banach space, see \cite{MR3289027}, Page 56.\\

Taking into account the definition of the Wiener shift (\ref{ws}), it is straightforward to see that that  $(Y,Y')\in \mathscr D_\om^{2\beta}([\tau, \tau+T], \RR^d)$ if and only if $(Y(\cdot+\tau),Y'(\cdot+\tau))\in \mathscr D_{\theta_\tau\om}^{2\beta}([0, T], \RR^d)$.

Furthermore, the composition of a smooth function with a controlled rough path is yet a controlled rough path.

\begin{lemma}\label{l7.3}
Assume that $(Y, Y')\in \mathscr D_\om^{2\beta}([0,T], \RR^d)$ and $f\in C^2_b (\RR^d, \RR^{d\times m})$. Then $f(Y)$ is also controlled by $\omega$ with
\begin{align*}
(f(Y(t)))'&=f'(Y(t))Y'(t), \\
R^{f(Y)}(s,t)&=f(Y(t))-f(Y(s))-f'(Y(s))Y'(s)(\omega(t)-\omega(s)).
\end{align*}
\end{lemma}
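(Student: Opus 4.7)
The plan is to verify directly that the proposed Gubinelli derivative $f'(Y)Y'$ lies in $C^\beta$ and that the residual $R^{f(Y)}$ defined in the statement lies in $C^{2\beta}$. The proof is a clean application of a first-order Taylor expansion combined with the controlled-path decomposition of $Y$.

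First I would check that $(f(Y))':=f'(Y)Y'$ is $\beta$-Hölder. Since $f\in C^2_b$, the derivative $f'$ is bounded and Lipschitz, so $f'(Y)$ is bounded with
\[
|f'(Y(t))-f'(Y(s))|\le \|f''\|_\infty |Y(t)-Y(s)|\le \|f''\|_\infty \ltn Y\rtn_\beta (t-s)^\beta.
\]
Combined with $Y'\in C^\beta$ and the fact that a product of bounded $\beta$-Hölder functions is $\beta$-Hölder, one obtains $\ltn f'(Y)Y'\rtn_\beta<\infty$ together with boundedness at $0$.

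Next I would bound the remainder. By the defining relation of $R^{f(Y)}$ and the controlled-path decomposition $Y(t)-Y(s)=Y'(s)(\omega(t)-\omega(s))+R^Y(s,t)$,
\[
R^{f(Y)}(s,t)=\bigl[f(Y(t))-f(Y(s))-f'(Y(s))(Y(t)-Y(s))\bigr]+f'(Y(s))R^Y(s,t).
\]
A first-order Taylor expansion with integral remainder,
\[
f(Y(t))-f(Y(s))-f'(Y(s))(Y(t)-Y(s))=\int_0^1(1-r)f''\!\bigl(Y(s)+r(Y(t)-Y(s))\bigr)\,dr\cdot(Y(t)-Y(s))^{\otimes 2},
\]
together with $f\in C^2_b$, yields
\[
\bigl|f(Y(t))-f(Y(s))-f'(Y(s))(Y(t)-Y(s))\bigr|\le \tfrac12\|f''\|_\infty \ltn Y\rtn_\beta^2 (t-s)^{2\beta}.
\]
For the second summand, since $f'$ is bounded,
\[
|f'(Y(s))R^Y(s,t)|\le \|f'\|_\infty \ltn R^Y\rtn_{2\beta}(t-s)^{2\beta}.
\]
Adding these two estimates shows $\ltn R^{f(Y)}\rtn_{2\beta}<\infty$, which together with the previous step proves that $f(Y)$ is controlled by $\omega$ with the announced Gubinelli derivative and remainder.

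There is no real obstacle here: the argument is just Taylor's theorem applied to $f$ along the path $Y$, together with substitution of the controlled-rough-path decomposition. The only point requiring a bit of attention is to keep track of the two distinct sources of smallness, namely the quadratic control $|Y(t)-Y(s)|^2\lesssim (t-s)^{2\beta}$ from the Taylor remainder and the intrinsic $(t-s)^{2\beta}$ control of $R^Y$; both land in $C^{2\beta}$, as required.
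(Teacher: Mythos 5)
Your proof is correct and follows exactly the standard argument that the paper itself only cites (Lemma 7.3 of Friz--Hairer): substitute the controlled decomposition of $Y$ into the remainder, apply a first-order Taylor expansion of $f$ to get the $(t-s)^{2\beta}$ bound, and check the $\beta$-H\"older regularity of $f'(Y)Y'$ as a product of bounded H\"older functions. Nothing is missing.
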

For the proof we refer to \cite{MR3289027}, Lemma 7.3.\\

In what follows we would like to define the integral of a controlled rough path. The idea is to define it by using compensations of Riemann sums, which makes sense due to the following result, known as sewing lemma.

Define the space $\mathcal C^{\beta, \gamma}(\Delta[0, T], \RR^m)$
of functions $\Xi$ from $\Delta[0, T]$ into $\RR^m$ with $\Xi(t, t)=0$ and
\begin{equation*}
\|\Xi\|_{\beta, \gamma}=\ltn \Xi\rtn_\beta+\ltn \delta \Xi\rtn_\gamma<\infty,
\end{equation*}
where $\delta\Xi(s,u,t):=\Xi(s,t)-\Xi(s,u)-\Xi(u,t)$ with
$\ltn \delta\Xi\rtn _\gamma:=\sup\limits_{s<u<t}\frac{|\delta \Xi(s,u,t)|}{|t-s|^\gamma}$.
\begin{lemma} (Sewing lemma, \cite[Lemma 4.2]{MR3289027}).
Assume $\beta \in (1/3,1/2)$ and $\gamma>1$. Then there exists a unique continuous map
$\mathcal I: \mathcal C^{\beta, \gamma}(\Delta[0, T], \RR^m)\to \mathcal C^{\beta}([0, T], \RR^m)$ such that $(\mathcal I\Xi)(0)=0$ and
\begin{equation}\label{I1}
(\mathcal I\Xi)(t)-(\mathcal I\Xi)(s)=\lim_{|\mathcal P(s,t)|\to 0}\sum_{[u,v]\in \mathcal P(s,t)}\Xi(u,v).
\end{equation}
\end{lemma}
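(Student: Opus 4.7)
The plan is to construct $\mathcal I\Xi$ as a limit of dyadic Riemann-type sums and then extend the convergence to arbitrary mesh-decreasing partitions via the bound provided by the cocycle increment $\delta\Xi$. Concretely, fix $0\le s\le t\le T$ and for $n\ge 0$ let $\mathcal P_n(s,t)$ denote the uniform dyadic partition of $[s,t]$ with $2^n$ subintervals, and set
\begin{equation*}
S_n(s,t):=\sum_{[u,v]\in\mathcal P_n(s,t)}\Xi(u,v),\qquad S_0(s,t)=\Xi(s,t).
\end{equation*}
The first step is to show that $(S_n(s,t))_n$ is Cauchy. Passing from $\mathcal P_n$ to $\mathcal P_{n+1}$ amounts to bisecting each of the $2^n$ subintervals, and each bisection introduces a correction equal to $-\delta\Xi$ evaluated at the relevant triple. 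Using $|\delta\Xi(a,b,c)|\le\ltn\delta\Xi\rtn_\gamma|c-a|^\gamma$, I obtain
\begin{equation*}
|S_{n+1}(s,t)-S_n(s,t)|\le 2^n\,\ltn\delta\Xi\rtn_\gamma\bigl((t-s)2^{-n}\bigr)^\gamma=\ltn\delta\Xi\rtn_\gamma(t-s)^\gamma\,2^{-n(\gamma-1)},
\end{equation*}
which is summable since $\gamma>1$. Telescoping yields the fundamental estimate
\begin{equation}\label{sewkey}
|S_n(s,t)-\Xi(s,t)|\le C_\gamma\,\ltn\delta\Xi\rtn_\gamma(t-s)^\gamma
\end{equation}
with $C_\gamma=(1-2^{1-\gamma})^{-1}$, and the limit $J(s,t):=\lim_n S_n(s,t)$ exists and still satisfies \eqref{sewkey}.

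Next I would extend the convergence to arbitrary partitions. Given any $\mathcal P=\{s=t_0<\cdots<t_N=t\}$ with $N\ge 2$, a pigeonhole argument produces an index $i$ with $t_{i+1}-t_{i-1}\le 2(t-s)/(N-1)$; removing that point changes the Riemann sum by exactly $\delta\Xi(t_{i-1},t_i,t_{i+1})$, which is bounded by $\ltn\delta\Xi\rtn_\gamma(2(t-s)/(N-1))^\gamma$. Iterating the removal down to $\{s,t\}$ and summing the geometric-type series $\sum_{k\ge 1}k^{-\gamma}$ (finite since $\gamma>1$) gives a bound of the same form as \eqref{sewkey} for an arbitrary partition $\mathcal P$. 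Applying this to common refinements of any two partitions shows that the Riemann sums form a Cauchy net as $|\mathcal P(s,t)|\to 0$ and the limit equals $J(s,t)$.

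With $J$ in hand I set $(\mathcal I\Xi)(t):=J(0,t)$, so that $(\mathcal I\Xi)(0)=0$ and the defining limit \eqref{I1} holds. Chen-type additivity $J(s,t)=J(s,u)+J(u,t)$ follows because a partition of $[s,t]$ that contains $u$ splits canonically into partitions of the two subintervals. Hölder regularity is an immediate consequence of \eqref{sewkey}: since $|\Xi(s,t)|\le\ltn\Xi\rtn_\beta(t-s)^\beta$ and $(t-s)^\gamma\le T^{\gamma-\beta}(t-s)^\beta$, one gets $\ltn\mathcal I\Xi\rtn_\beta\le\ltn\Xi\rtn_\beta+C_\gamma T^{\gamma-\beta}\ltn\delta\Xi\rtn_\gamma$, proving linearity and continuity of $\mathcal I:\mathcal C^{\beta,\gamma}\to\mathcal C^\beta$. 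Uniqueness follows because any two maps satisfying the same limit identity \eqref{I1} differ by an additive constant in $t$, which is forced to vanish by the boundary condition at $0$.

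The main obstacle is the extension step from dyadic to general partitions; the dyadic Cauchy estimate is routine once one spots the $2^{n(1-\gamma)}$ telescoping, but extracting a uniform bound like \eqref{sewkey} for arbitrary $\mathcal P$ requires the pigeonhole-plus-greedy-removal argument, and the convergence of $\sum k^{-\gamma}$ is where the hypothesis $\gamma>1$ is essentially used. Everything else is bookkeeping and the linearity of $\Xi\mapsto S_n$.
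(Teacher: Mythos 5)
The paper offers no proof of this lemma at all --- it is quoted directly from \cite[Lemma 4.2]{MR3289027} --- and your argument is essentially the standard proof given in that reference (dyadic telescoping for the basic Cauchy estimate, then the pigeonhole point-removal argument with $\sum_k k^{-\gamma}<\infty$ to control arbitrary partitions), so it is correct. The only step you state a bit tersely is the passage from the uniform bound $|S_{\mathcal P}(s,t)-\Xi(s,t)|\le C_\gamma\ltn\delta\Xi\rtn_\gamma(t-s)^\gamma$ to the Cauchy-net property: comparing $\mathcal P$ with a refinement $\mathcal Q$ one writes $S_{\mathcal Q}(s,t)-S_{\mathcal P}(s,t)=\sum_{[u,v]\in\mathcal P}\bigl(S_{\mathcal Q\cap[u,v]}(u,v)-\Xi(u,v)\bigr)$ and needs $\sum_{[u,v]\in\mathcal P}(v-u)^\gamma\le|\mathcal P|^{\gamma-1}(t-s)\to 0$, a routine superadditivity observation that again uses $\gamma>1$, after which comparing two partitions to their common refinement finishes the argument.
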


As a consequence, it is easy to derive that under the assumptions of Lemma \ref{l7.3}, considering
$$\Xi(u,v)=f(Y(u))(\w(v)-\w(u))+f'(Y(u))Y'(u) \bbomega(u,v)$$
we can define the integral as follows
\begin{equation}\label{I2}
\int_s^t f(Y(r))d\bm\w(r)=\lim_{|\mathcal P(s,t)|\to 0}\sum_{[u,v]\in \mathcal P(s,t)}\Xi(u,v).
\end{equation}

Note that
\begin{align*}
&\sum_{[u,v]\in \mathcal P(s+\tau,t+\tau)}\big(f(Y(u))(\w(v)-\w(u))+f'(Y(u))Y'(u) \bbomega(u,v)\big)\\
&=\sum_{[u,v]\in \mathcal P(s,t)}\big(f(Y(u+\tau))(\w(v+\tau)-\w(u+\tau))+f'(Y(u+\tau))Y'(u+\tau)\bbomega(u+\tau,v+\tau)\big)
\\
&=\sum_{[u,v]\in \mathcal P(s,t)}\big(f(Y(u+\tau))(\theta_\tau\w(v)-\theta_\tau \w(u))+f'(Y(u+\tau))Y'(u+\tau)\theta_\tau\bbomega(u,v)\big)\\
&=\sum_{[u,v]\in \mathcal P(s,t)}\big(f(\tilde Y(u))(\theta_\tau\w(v)-\theta_\tau \w(u))+f'(\tilde Y(u))\tilde Y'(u)\theta_\tau\bbomega(u,v)\big),
\end{align*}
where in the last equality $\tilde Y$ denotes the function defined as $\tilde Y(u):=Y(u+\tau)$, for $u\in [s,t]$. Now, taking the limit as $|\mathcal P(s,t)|\to 0$ in the above expression (and thus also $|\mathcal P(s+\tau,t+\tau)|\to 0$), by \eqref{I2} and the definition of $\theta_\tau \bm \om=(\theta_\tau \omega,\theta_\tau\bbomega)$, we have
 \begin{align}\label{I3}
\int_{s+\tau}^{t+\tau}f(Y(r))d\bm \w(r)=\int_s^t f(Y(r+\tau))d\theta_\tau\bm \w(r)=\int_s^t f(\tilde Y(r))d\theta_\tau\bm \w(r).
\end{align}
The expression (\ref{I3}) represents the behavior of the integral under a change of varia\-ble, and this will be the key property, together with the pathwise character of the integral, to further establish the cocycle property for the RDS generated by rough differential equations, see Lemma \ref{l4} below.\\

Now we would like to solve rough differential equations driven by the rough path $\bm \om$.  For $T>0$, consider the equation
 \begin{align}\label{equ1}
          \left\{\begin{array}{rll}
          dY(t)&=&f(Y(t))d\bm \om(t), \quad t\in [0, T]\\
          Y(0)&=&\xi\in \RR^d.
          \end{array}          \right.
        \end{align}
The integral against $\bm \om$ has to be understood as in (\ref{I2}) above. The following result regarding the existence and uniqueness of a solution to (\ref{equ1}) can be found in \cite{MR3289027}, Theorem 8.4.
\begin{theorem}\label{t3}
Let us consider any $T>0$, $\beta \in (1/3,1/2)$, $f\in C_b^3(\RR^d, \RR^{d\times m})$
and $\xi\in \RR^d$. Then there is a unique solution
$(Y, Y')\in \mathscr D_\om^{2\beta}([0, T], \RR^d)$ to \eqref{equ1}, that is,
\begin{equation}\label{e1a1}
Y(t)=\xi+\int_0^t f(Y(s))d\bm \om(s),  \quad\forall t\in [0, T]
\end{equation}
with $Y'=f(Y)$.
\end{theorem}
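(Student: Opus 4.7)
The natural approach is a Banach fixed point argument on a closed ball in the space of controlled rough paths. Define, on a small subinterval $[0,T_0]\subseteq[0,T]$, the map
\begin{equation*}
\mathcal M_\xi(Y,Y') := \Bigl(\xi+\int_0^{\cdot}f(Y(r))\,d\bm\om(r),\;f(Y)\Bigr),
\end{equation*}
where the integral is produced via the sewing lemma applied to $\Xi(u,v)=f(Y(u))(\om(v)-\om(u))+f'(Y(u))Y'(u)\bbomega(u,v)$. The first step is to check that $\mathcal M_\xi$ really sends $\mathscr D^{2\beta}_\om([0,T_0],\RR^d)$ into itself. By Lemma \ref{l7.3} (applied with $f\in C^2_b$, which is implied by $C^3_b$), $(f(Y),f'(Y)Y')\in\mathscr D^{2\beta}_\om([0,T_0],\RR^{d\times m})$, so the Gubinelli derivative of the integral is $f(Y)$ with remainder $R(s,t)=\int_s^t f(Y)d\bm\om - f(Y(s))(\om(t)-\om(s))$; the sewing lemma gives $|R(s,t)|\lesssim|t-s|^{3\beta}$, which (since $3\beta>1$ is \emph{not} what we need, but $R\in C^{2\beta}$ certainly holds) yields the required control.

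The main technical step is to obtain a quantitative estimate of the form
\begin{equation*}
\|\mathcal M_\xi(Y,Y')\|_{\om,\beta}\le C_1|\xi|+C_2\bigl(1+\ltn\bm\om\rtn_\beta\bigr)\bigl(1+\|(Y,Y')\|_{\om,\beta}\bigr)^{3}\,T_0^{\beta},
\end{equation*}
and the contraction estimate
\begin{equation*}
\|\mathcal M_\xi(Y^1,Y^{1\prime})-\mathcal M_\xi(Y^2,Y^{2\prime})\|_{\om,\beta}\le C_3\bigl(1+\ltn\bm\om\rtn_\beta\bigr)\bigl(1+R\bigr)^{2}\,T_0^{\beta}\,\|(Y^1-Y^2,Y^{1\prime}-Y^{2\prime})\|_{\om,\beta},
\end{equation*}
on the closed ball of radius $R$ around the constant path $(\xi,f(\xi))$, with initial condition $\xi$ fixed. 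Both estimates follow by combining Lemma \ref{l7.3} with the sewing lemma; the boundedness of $f,f',f'',f'''$ enters through the Taylor expansion controlling $R^{f(Y)}$ and the analogous expansion for $f(Y^1)-f(Y^2)$. I expect this \emph{local} step, in particular bookkeeping the $T_0^\beta$ factor through every remainder term so that the Lipschitz constant can be driven below one by shrinking $T_0$, to be the technically delicate point of the proof. Choosing $T_0=T_0(\ltn\bm\om\rtn_\beta,|\xi|,\|f\|_{C^3_b})$ small enough makes $\mathcal M_\xi$ a contraction on a suitable ball, and Banach's theorem gives a unique fixed point $(Y,Y')\in\mathscr D^{2\beta}_\om([0,T_0],\RR^d)$ with $Y'=f(Y)$ and satisfying \eqref{e1a1} on $[0,T_0]$.

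To pass from local to global existence on $[0,T]$, I would iterate the construction: starting from $Y(T_0)$, apply the same fixed point theorem on $[T_0,2T_0]$ to $\theta_{T_0}\bm\om$ using the shift invariance recorded in \eqref{I3}, and paste the pieces together, checking that the concatenation still lies in $\mathscr D^{2\beta}_\om$ (this is automatic from the defining seminorms). The crucial fact making this patching work in fixed-length steps is that $f\in C^3_b$ makes the local existence time $T_0$ depend only on $\ltn\bm\om\rtn_\beta$ and $\|f\|_{C^3_b}$, and \emph{not} on the size of the current initial condition; hence finitely many steps exhaust $[0,T]$. Uniqueness on $[0,T]$ follows by standard arguments: any two solutions coincide on a maximal subinterval $[0,T^\ast]$ by local uniqueness, and a further application of local existence at $T^\ast$ contradicts maximality unless $T^\ast=T$.
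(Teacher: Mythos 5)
Your argument is correct and is essentially the canonical one: the paper gives no proof of its own here but simply cites Theorem 8.4 of \cite{MR3289027}, whose proof is exactly the Banach fixed point argument in $\mathscr D_\om^{2\beta}$ that you outline, including the use of $C^3_b$ both for the contraction estimate and to make the local existence time independent of the initial condition so that the solution can be globalized in finitely many steps. Your parenthetical about the remainder is slightly garbled (the point is that $R^Z(s,t)=f'(Y(s))Y'(s)\bbomega(s,t)+O(|t-s|^{3\beta})$, hence of order $|t-s|^{2\beta}$), but the conclusion you draw from it is the right one.
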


In the next result we establish that the solution operator given by (\ref{e1a1}) is a cocycle.
\begin{lemma}\label{l4}
Over the metric dynamical system $(\Omega, \fF,\PP_{\frac 12},\theta)$ introduced in Example \ref{ex1}, the solution of equation \eqref{equ1} generates a random dynamical system $\varphi_0:\RR^+ \times \Omega \times \RR^m \mapsto \RR^m$ given by
$\varphi_0(t,\omega,\xi)=Y(t)$, for all $t\in [0, T]$ and $\omega\in \Omega$.
\end{lemma}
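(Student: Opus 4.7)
The plan is to verify the three requirements of Definition \ref{d2}: the initial-time identity, joint measurability, and the cocycle property. Define $\varphi_0(t,\omega,\xi):=Y(t;\omega,\xi)$, where $Y(\cdot;\omega,\xi)$ is the unique solution of \eqref{equ1} on $[0,t]$ provided by Theorem \ref{t3}. First I would note that Theorem \ref{t3} is stated on an arbitrary bounded interval $[0,T]$; since the estimates are local and $f\in C_b^3$, by uniqueness the solutions on $[0,T_1]$ and $[0,T_2]$ agree on the overlap, so we may patch them together into a unique solution on $[0,\infty)$. The identity $\varphi_0(0,\omega,\cdot)=\mathrm{id}$ is immediate from \eqref{e1a1}.

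For measurability, the map $\omega\mapsto\bm\om=(\omega,\bbomega)$ constructed in Section \ref{s3} is measurable from $(\Omega,\fF)$ into $\mathcal C^\beta_g([-T,T],\RR^m)$. By the continuity of the It\^o--Lyons solution map established via Theorem \ref{t3} (the solution depends continuously on the initial datum and on the driving rough path in the $\rho_{\beta,0,T}$-metric), the map $(\xi,\bm\om)\mapsto Y(\cdot;\omega,\xi)$ is continuous, hence measurable. Joint measurability in $(t,\omega,\xi)$ then follows from the continuity in $t$ together with separate measurability in $(\omega,\xi)$.

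The core of the proof is the cocycle property. Fix $\omega\in\Omega$, $\xi\in\RR^d$ and $t_1,t_2\in\RR^+$, and let $Y(t)=\varphi_0(t,\omega,\xi)$ for $t\in[0,t_1+t_2]$. Split the integral equation at $t_1$: for any $t\in[0,t_2]$,
\begin{equation*}
Y(t_1+t)=\xi+\int_0^{t_1} f(Y(s))\,d\bm\om(s)+\int_{t_1}^{t_1+t} f(Y(s))\,d\bm\om(s)=Y(t_1)+\int_{t_1}^{t_1+t} f(Y(s))\,d\bm\om(s).
\end{equation*}
Applying the change-of-variable identity \eqref{I3} to the last integral with $\tau=t_1$ and $\tilde Y(s):=Y(s+t_1)$ gives
\begin{equation*}
\tilde Y(t)=Y(t_1)+\int_0^{t} f(\tilde Y(s))\,d\theta_{t_1}\bm\om(s),\qquad t\in[0,t_2].
\end{equation*}
Thus $\tilde Y$ solves the rough differential equation driven by $\theta_{t_1}\bm\om$ with initial datum $Y(t_1)=\varphi_0(t_1,\omega,\xi)$. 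By the uniqueness part of Theorem \ref{t3}, $\tilde Y(t)=\varphi_0(t,\theta_{t_1}\omega,\varphi_0(t_1,\omega,\xi))$, which, combined with $\tilde Y(t_2)=Y(t_1+t_2)=\varphi_0(t_1+t_2,\omega,\xi)$, is precisely the cocycle identity.

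The step I expect to require the most care is ensuring that the cocycle identity holds for every $\omega\in\Omega$ (not merely almost surely). This is where the rough path formulation pays off: both the lift $\bm\om$ (constructed in Section \ref{s3} on the invariant full-measure set, with $\bm\om\equiv 0$ off of it) and the rough integral defined via the sewing lemma \eqref{I1}--\eqref{I2} are pathwise objects, so the shift relation \eqref{I3} and the uniqueness argument are valid for every $\omega\in\Omega$ once the definitions have been made consistently. On the exceptional set where $\omega$ was redefined to be zero, the cocycle identity holds trivially because the driver and its shifts all vanish, reducing the equation to the deterministic ODE $dY=f(Y)\cdot 0$ with the constant solution $\varphi_0(t,\omega,\xi)=\xi$.
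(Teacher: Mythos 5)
Your proposal is correct and follows essentially the same route as the paper: the cocycle property is obtained by splitting the rough integral at the intermediate time, applying the shift identity \eqref{I3}, and invoking the uniqueness part of Theorem \ref{t3}. The only cosmetic difference is that you justify measurability via continuity of the solution map in $(\xi,\bm\om)$ while the paper appeals to the Picard iteration; both are standard and equivalent here.
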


\begin{proof}
First of all, the measurability of $\varphi_0$ directly follows from the Picard iteration argument.
On the other hand, it is trivial to see that $\varphi_0(0,\omega,\xi)=\xi.$ Therefore, it remains to prove the cocycle property. Taking into account that the integral
defined by (\ref{I2}) is additive, and its behavior under a change of variable is given by (\ref{I3}), we have the following chain of equalities
\begin{align*}
\varphi_0(t+\tau,\omega,\xi)&=\xi+\int_0^{t+\tau} f(Y(s))d\bm \om(s)\\
&=\xi+\int_0^{\tau} f(Y(s))d\bm \om(s)+\int_\tau^{t+\tau} f(Y(s))d\bm \om(s)\\
&=Y(\tau)+\int_0^t f(Y(s+\tau))d\theta_\tau\bm \om(s).
\end{align*}
Considering again the auxiliary function $\tilde Y(t):=Y(t+\tau)$ for $t\geq 0$, the previous expression reads
\begin{align*}
\varphi_0(t+\tau,\omega,\xi)&=\tilde Y(0)+\int_0^t f(\tilde Y(s))d\theta_\tau\bm \om(s)\\
&=\varphi_0(t,\theta_\tau \omega,\varphi_0(\tau,\omega,\xi)),
\end{align*}
which completes the proof.
\end{proof}

\vskip.3cm
In what follows, we would like to compare the solution of \eqref{equ1} with that of the corresponding system driven by the approximated lift $ \bm\om_\de$, that is to say, we consider the following rough differential equation
 \begin{align}\label{equ2}
          \left\{\begin{array}{rll}
          dY_\de(t)&=&f(Y_\de(t))d\bm \om_\de(t), \quad t\in [0, T],\\
          Y_\de(0)&=&\xi_\de \in \RR^d,
          \end{array}          \right.
        \end{align}
with the aim of establishing the relationship with system (\ref{equ1}).

We should remark that $\om_\de$ is in $C^1([0, T], \RR^m)$ and therefore we can use the ordinary differential equations theory to interpret this equation. This actually means that (\ref{equ2}) is equivalent to the following
random ordinary differential equation
 \begin{align}\label{equ3}
          \left\{\begin{array}{rll}
          dY_\de(t)&=&f(Y_\de(t))d\om_\de(t), \quad t\in [0, T],\\
          Y_\de(0)&=&\xi_\de \in \RR^d,
          \end{array}          \right.
        \end{align}
Then under the regularity assumption $f\in C^3_b(\RR^d,  \RR^{d\times m})$ (which indeed is too much regularity for solving the ordinary differential equation (\ref{equ3})), there exists a unique $Y_\de \in C^{\beta}([0,T],\RR^d)$ solving \eqref{equ2}, that furthermore generates an RDS $\varphi_\de$ given by $\varphi_\de(t,\omega,\xi_\de)=Y_\de(t)$ for all $t\in [0, T]$ and $\omega\in \Omega$.\\

Now we are in position to establish the relationship between the two RDS $\varphi_\de$ and $\varphi_0$.
\begin{theorem}\label{m1}
Let $f\in C_b^3(\RR^d, \RR^{d\times m})$. For any given $T>0$ and $\beta\in (1/3,1/2)$,
there exists a sequence of positive numbers $(\delta_i)_{i\in\NN}$ (say dyadic numbers) converging to zero
such that if the sequence of initial conditions $(\xi_{\de_i})_{i\in\NN}$ converges to $\xi$, then the RDS $\varphi_{\de_i}$ generated by the solution of system (\ref{equ2}),
converges to $\varphi_0$, the RDS generated by the solution of (\ref{equ1}), in the space $C^\beta([0, T], \RR^d)$ when $i\to \infty$.
\end{theorem}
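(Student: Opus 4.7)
The plan is to combine the pathwise convergence of the rough paths $\bm \om_{\delta_i}\to\bm \om$ established in Theorem \ref{l3} with the local Lipschitz continuity of the It\^o--Lyons solution map for rough differential equations driven by $C^3_b$ vector fields.

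First I would fix $\omega$ in the full-measure $\theta$-invariant set $\Omega'$ constructed in Theorem \ref{l3}, so that along the dyadic sequence $\delta_i=2^{-i}$ we have the convergence $\bm \om_{\delta_i}\to\bm \om$ in the rough path metric $\rho_{\beta,0,T}$. This immediately gives a uniform bound $\sup_{i\in\NN}\ltn\bm \om_{\delta_i}\rtn_\beta\le K$ and $\ltn\bm \om\rtn_\beta\le K$ for some constant $K=K(\omega,T)>0$, and the hypothesis $\xi_{\delta_i}\to\xi$ yields $\sup_i|\xi_{\delta_i}|\le R$ for some $R>0$.

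Next, recall from Theorem \ref{t3} that the unique solutions $(Y,Y')\in\mathscr D^{2\beta}_\omega([0,T],\RR^d)$ of (\ref{equ1}) and $(Y_{\delta_i},Y'_{\delta_i})\in\mathscr D^{2\beta}_{\omega_{\delta_i}}([0,T],\RR^d)$ of (\ref{equ2}) satisfy $Y'=f(Y)$ and $Y'_{\delta_i}=f(Y_{\delta_i})$. I would then invoke the local Lipschitz stability estimate for rough differential equations (\cite{MR3289027}, Theorem 8.5), which under the bounds above and $f\in C^3_b$ produces a constant $C=C(K,R,\|f\|_{C^3_b},T)>0$ with
\begin{equation*}
\|Y-Y_{\delta_i}\|_{C^\beta([0,T],\RR^d)}\le C\bigl(|\xi-\xi_{\delta_i}|+\rho_{\beta,0,T}(\bm \om,\bm \om_{\delta_i})\bigr).
\end{equation*}
Combined with Theorem \ref{l3} and the convergence of the initial conditions, the right-hand side tends to zero as $i\to\infty$. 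Since by Lemma \ref{l4} and its analogue for the approximating equation we have $\varphi_0(\cdot,\omega,\xi)=Y$ and $\varphi_{\delta_i}(\cdot,\omega,\xi_{\delta_i})=Y_{\delta_i}$, the claimed convergence follows for every $\omega\in\Omega'$.

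The main obstacle lies in the application of the stability theorem: $Y$ and $Y_{\delta_i}$ are controlled by \emph{different} reference paths, namely $\omega$ and $\omega_{\delta_i}$ respectively, and therefore live in distinct Banach spaces of controlled paths. To compare them one must work with a cross norm that simultaneously measures the differences of the path components, of the Gubinelli derivatives $Y'-Y'_{\delta_i}$ and of the remainders $R^Y-R^{Y_{\delta_i}}$, all evaluated against the two reference paths. This is precisely the quantity controlled by \cite{MR3289027}, Theorem 8.5, whose proof proceeds via a fixed-point argument for the sewing-lemma integral (\ref{I2}) on a sufficiently short interval, followed by concatenation over $[0,T]$; the uniform bounds in $K$ and $R$ are exactly what allows this concatenation to remain controlled independently of $i$. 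Projecting the resulting cross-norm estimate onto the $C^\beta$-norm of $Y-Y_{\delta_i}$ produces the displayed bound and completes the plan.
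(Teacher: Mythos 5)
Your proposal is correct and follows essentially the same route as the paper: a uniform bound $K(\omega)$ on the rough path norms of $\bm\om$ and $\bm\om_{\delta_i}$, followed by the local Lipschitz stability estimate for the It\^o--Lyons map (Theorem 8.5 and Remark 8.6 of \cite{MR3289027}) and the pathwise convergence from Theorem \ref{l3}. Your additional remarks on comparing controlled paths over different reference signals merely spell out what that stability theorem already handles, so no further commentary is needed.
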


\begin{proof}
Since $\bm \w$ and $\bm \w_{\de_i}$ are rough paths, there exists a positive constant $K=K(\om)$ such that
$$\ltn \bm \w\rtn_{\beta,0,T} \leq K,\quad \ltn \bm \w_{\de_i}\rtn_{\beta,0,T} \leq K.$$
Now, by Theorem 8.5 and Remark 8.6 in \cite{MR3289027}, we know that there exists
$C=C(K, \beta, f)>0$ such that
\begin{align*}
\ltn Y_{\de_i}-Y\rtn_{\beta,0,T}\le C\Big(|\xi_{\de_i}-\xi|+\rho_{\beta,0, T} (\bm\w_{\de_i},\bm \w)\Big)
\end{align*}
hence Theorem \ref{l3} implies
$$\ltn \varphi_{\de_i}-\varphi_0\rtn_{\beta,0,T}\le C\Big(|\xi_{\de_i}-\xi|+\rho_{\beta,0, T} (\bm\w_{\de_i},\bm \w)\Big)\to 0,$$
when $i\to \infty$, which completes the proof.
\end{proof}


\section*{Appendix: Covariances of $\omega_\delta$ and $X_\delta$ for the fBm with any $H\in (0,1)$}\label{appn}

In this section, we would like to compute the explicit expression of the covariance of $\omega_\delta$ as well as that of $X_\delta$. In order to make the result as much general as possible, in this section we assume that $\omega$ is a fractional Brownian motion with Hurst parameter $H\in (0,1)$. In that way, in a forthcoming research, we will able to use the results when considering a fractional Brownian motion as driven noise.

\begin{theorem}\label{c1}
The covariance of  $X_\de$ is $\sigma_{X_\de}^2(u)=K(u){\rm id}$ with $K(u)$ given by
\begin{eqnarray*}
\begin{split}
K(u)
&=\frac{1}{H+1}\delta^{2H}+\frac 1{\delta^2(2H+1)(2H+2)}\bar K(u),
\end{split}
\end{eqnarray*}
where for $u\ge\de$,
\begin{eqnarray*}
\begin{split}
\bar K(u)&=(u+\de)^{2H+2}+(u-\de)^{2H+2}-2u^{2H+2}\\
&\qquad-\de(2H+2)((u+\de)^{2H+1}-(u-\de)^{2H+1})+\de^2(2H+1)(2H+2)u^{2H},
\end{split}
\end{eqnarray*}
and for $0\le u<\de$,
\begin{eqnarray*}
\begin{split}
\bar K(u)&=(u+\de)^{2H+2}+(\de-u)^{2H+2}-2u^{2H+2}\\
&\qquad-\de(2H+2)((u+\de)^{2H+1}+(\de-u)^{2H+1})+\de^2(2H+1)(2H+2)u^{2H}.
\end{split}
\end{eqnarray*}
\end{theorem}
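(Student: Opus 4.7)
The plan is to compute $K(u)$ directly for a single component of $X_\delta$; independence of the components then gives the diagonal form $K(u)\,\mathrm{id}$ automatically. By Lemma~\ref{prop} the increments of $X_\delta$ are stationary, so $\sigma^2_{X_\delta^i}(u) = \EE(X_\delta^i(u))^2$ for $u\ge 0$. Writing $X_\delta = \omega_\delta - \omega$ and expanding the square gives
\begin{equation*}
K(u) = \EE[\omega_\delta^i(u)^2] - 2\,\EE[\omega_\delta^i(u)\omega^i(u)] + u^{2H},
\end{equation*}
so everything reduces to evaluating the first two expectations using the fBm covariance $\EE[\omega^i(a)\omega^i(b)] = \tfrac12(a^{2H}+b^{2H}-|a-b|^{2H})$.

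For the cross term I would substitute $\omega_\delta^i(u) = \tfrac{1}{\delta}\int_0^u(\omega^i(r+\delta)-\omega^i(r))\,dr$, apply Fubini, and obtain
\begin{equation*}
\EE[\omega_\delta^i(u)\omega^i(u)] = \frac{1}{2\delta}\int_0^u\bigl[(r+\delta)^{2H}-r^{2H}-|r+\delta-u|^{2H}+(u-r)^{2H}\bigr]\,dr,
\end{equation*}
whose antiderivative is elementary. For the variance of $\omega_\delta^i(u)$ Fubini produces a double integral over $[0,u]^2$ of
\begin{equation*}
\EE\bigl[(\omega^i(r+\delta)-\omega^i(r))(\omega^i(s+\delta)-\omega^i(s))\bigr] = \tfrac12\bigl(|r-s+\delta|^{2H}+|r-s-\delta|^{2H}-2|r-s|^{2H}\bigr).
\end{equation*}
The convenient primitive is $\Phi(x) := |x|^{2H+2}/[(2H+1)(2H+2)]$, which satisfies $\Phi''(x)=|x|^{2H}$ for $x\neq 0$; so each double integral $\int_0^u\!\int_0^u|r-s+c|^{2H}\,dr\,ds$ collapses, after integrating first in $r$ and then in $s$, to a combination of boundary values of $\Phi$ at the points $u+c$, $c$, $u-c$, $-c$. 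Collecting the pieces and dividing by $\delta^2$ accounts for the overall factor $1/[\delta^2(2H+1)(2H+2)]$ in the statement, while the $u$-independent term $\delta^{2H}/(H+1)$ emerges from the boundary values $\Phi(\pm\delta)$ once the $u$-linear contributions from the three fBm integrals cancel against the $u^{2H}$ term.

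The main obstacle will be the case distinction forced by the absolute values $|r-s\pm\delta|$ on $[0,u]^2$: when $u\ge\delta$, the line $r-s=\delta$ cuts the square into two pieces and $r-s-\delta$ changes sign, whereas for $0\le u<\delta$ the quantity $r-s-\delta$ is strictly negative on the whole square. These two regimes produce, respectively, the expressions involving $(u-\delta)^{2H+2},\,(u-\delta)^{2H+1}$ and those involving $(\delta-u)^{2H+2},\,(\delta-u)^{2H+1}$, which is precisely why the stated formula bifurcates. As a final sanity check I would specialize to $H=1/2$ and verify that the resulting piecewise polynomial agrees with \eqref{equa1}, and would also confirm continuity at $u=\delta$ in the general case (where both pieces of $\bar K$ must coincide).
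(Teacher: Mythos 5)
Your proposal is correct and follows essentially the same route as the paper: the paper likewise expands $\EE(X^i_\delta(u))^2=\EE(\omega^i_\delta(u))^2-2\,\EE(\omega^i_\delta(u)\omega^i(u))+u^{2H}$ and evaluates the two expectations by Fubini against the fBm covariance, with the same case split at $u=\delta$ (this is exactly the content of Lemma~\ref{l2}, whose combination with $\sigma^2_\omega(u)=|u|^{2H}$ is the paper's entire proof of Theorem~\ref{c1}). Your organization of the double integral via the primitive $\Phi$ is only a cosmetic variation on the paper's region-by-region integration, and your sanity checks (recovering Corollary~\ref{c2} at $H=1/2$, continuity at $u=\delta$) are consistent with the stated formulas.
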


The proof of this result is based on the following lemma.

\begin{lemma}\label{l2}
Assume that $\omega$ is a fractional Brownian motion with values in $\RR^m$ and Hurst parameter $H\in (0,1)$. Then for $u\ge 0$ the cox of $\omega_\de(u)$ is given by $I(u){\rm id}$, where
\begin{align*}
         I(u):= &\frac{1}{\delta^2(2H+1)(2H+2)}
     \\
         &\times\left\{\begin{array}{lcr}((u+\delta)^{2H+2}-2\delta^{2H+2}
         -2u^{2H+2}+(u-\de)^{2H+2}),& u\ge\delta&\\
          ((u+\delta)^{2H+2}-2\delta^{2H+2}
         -2u^{2H+2}+(\delta-u)^{2H+2}),&u<\delta&
          \end{array}          \right.
 \end{align*}
Furthermore, for $u\ge 0$, $\EE\omega_\de(u) \omega(u)$ is given by $J(u){\rm id}$, being
          \begin{align*}
          J(u):=&\frac{1}{{2}\delta(2H+1)}\times\left\{\begin{array}{lcr}(u+\delta)^{2H+1}-2\delta^{2H+1}-(u-\delta)^{2H+1}, &u\ge\delta&\\
          (u+\delta)^{2H+1}-2\delta^{2H+1}+(\delta-u)^{2H+1}, &u<\delta.&
          \end{array}          \right.
        \end{align*}
 \end{lemma}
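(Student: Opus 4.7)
The plan is to reduce everything to the one-dimensional case. Since the $m$ components of $\omega$ are independent fractional Brownian motions with common Hurst index $H$, the cross-component entries of $\EE[\omega_\delta(u)\omega_\delta(u)^\top]$ and of $\EE[\omega_\delta(u)\omega(u)^\top]$ vanish and the diagonal entries share a common value. This reduces the statement to the two scalar identities
\[
I(u)=\EE\,\omega_\delta(u)^2,\qquad J(u)=\EE[\omega_\delta(u)\,\omega(u)],
\]
which I would then obtain by direct integration using the scalar fBm covariance $R(t,s)=\tfrac12(|t|^{2H}+|s|^{2H}-|t-s|^{2H})$.

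For $I(u)$, inserting the definition $\omega_\delta(u)=\frac{1}{\delta}\int_0^u(\omega(r+\delta)-\omega(r))\,dr$ and applying Fubini yields
\[
I(u)=\frac{1}{\delta^2}\int_0^u\!\!\int_0^u \EE\bigl[(\omega(r+\delta)-\omega(r))(\omega(s+\delta)-\omega(s))\bigr]\,dr\,ds.
\]
Expanding the expectation via $R$, all terms involving only one of $r,s$ cancel and the integrand collapses to $\tfrac12\bigl[|r-s+\delta|^{2H}+|r-s-\delta|^{2H}-2|r-s|^{2H}\bigr]$, which depends only on $t=r-s$ and is even in $t$. Using $\int_0^u\!\!\int_0^u g(r-s)\,dr\,ds=2\int_0^u(u-t)g(t)\,dt$ for even $g$, I obtain
\[
I(u)=\frac{1}{\delta^2}\int_0^u (u-t)\bigl[(t+\delta)^{2H}+|t-\delta|^{2H}-2t^{2H}\bigr]\,dt.
\]
Antidifferentiating term by term (splitting into $u\ge\delta$ and $u<\delta$ according to the sign of $t-\delta$ on $[0,u]$) produces powers of $(u+\delta)$, $u\pm\delta$, $\delta$ and $u$ of order $2H+2$, which collect into exactly the closed form in the statement after the intermediate $(2H+1)$--th powers of $\delta$ cancel pairwise.

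For $J(u)$ the argument is simpler, since only one integration appears:
\[
J(u)=\frac{1}{\delta}\int_0^u \EE\bigl[(\omega(r+\delta)-\omega(r))\,\omega(u)\bigr]\,dr,
\]
and the $R$--identity together with $0\le r\le u$ gives the integrand as $\tfrac12[(r+\delta)^{2H}-r^{2H}+(u-r)^{2H}-|u-r-\delta|^{2H}]$. A single antidifferentiation in $r$ yields the stated $J(u)$, where the case split is now forced by whether $u-r-\delta$ changes sign on $[0,u]$, i.e.\ by $u\ge\delta$ versus $u<\delta$. The main (and really only) obstacle throughout is algebraic bookkeeping: keeping track of the correct antiderivative of $|t-\delta|^{2H}$ on either side of $\delta$, handling boundary values at $t=0,\delta,u$, and collecting polynomial terms so that the $\delta^{2H+2}$ and $\delta^{2H+1}$--coefficients combine to match the statement. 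Sanity checks such as $I(0)=J(0)=0$, continuity of $I,J$ at $u=\delta$, and the Brownian specialisation $H=\tfrac12$ (which should combine with $\EE\omega(u)^2=u^{2H}$ to reproduce the $K(u)$ of Corollary \ref{c1} via $K=I+u^{2H}-2J$) make the calculation self-verifying.
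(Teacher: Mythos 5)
Your proposal is correct and follows essentially the same route as the paper: both reduce to the scalar case by independence of components, expand $\EE[(\omega(r+\delta)-\omega(r))(\omega(s+\delta)-\omega(s))]$ and $\EE[(\omega(r+\delta)-\omega(r))\omega(u)]$ via the fBm covariance $R$, and integrate with the same case split at $u=\delta$. The only (harmless) difference is that for $I(u)$ you collapse the double integral to $\frac{1}{\delta^2}\int_0^u(u-t)\bigl[(t+\delta)^{2H}+|t-\delta|^{2H}-2t^{2H}\bigr]\,dt$ using stationarity and evenness of the kernel, whereas the paper evaluates the three double integrals $I_1,I_2,I_3$ directly by splitting the inner integration range; both yield the stated closed forms.
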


\begin{proof}
We start calculating the covariance of $\omega_\delta$.  First, note that for $1\le i, j\le m$,
\begin{equation*}
 \omega^i_\delta(u)\omega^j_\delta(u)=\frac{1}{\delta^2}\int_0^u\int_0^u\theta_r\omega^i(\delta)\theta_q\omega^j(\delta)dqdr.
\end{equation*}
For $i\neq j$, $\EE(\omega^i_\delta(u)\omega^j_\delta(u))=0$ due to the independence of the components of $\omega$. If $i=j$,
\begin{align*}
  I(u):=\EE(\omega^i_\delta(u))^2&=\frac{1}{2\delta^2}\int_0^u\int_0^u|r-\delta-q|^{2H}  +|r+\delta-q|^{2H} -2|r-q|^{2H}dqdr\\
  &=:I_1(u)+I_2(u)+I_3(u).
\end{align*}
Notice that we have
\begin{align*}
 I_3(u)& = -\frac{2}{\delta^2}\int_0^u\int_0^r(r-q)^{2H}dqdr
 =-\frac{2}{\delta^2(2H+1)}\int_0^ur^{2H+1}dr\\
 &=-\frac{2}{\delta^2(2H+2)(2H+1)}u^{2H+2}.
\end{align*}

\medskip

Now we study $I_1$. First of all, consider the case $\delta<u$ and suppose $r-q>\delta$.
Then we have
\begin{align*}
  \int_{q+\delta}^u(r-q-\delta)^{2H}dr=\frac{1}{2H+1}(u-q-\delta)^{2H+1},
\end{align*}
that it is well-defined since $\delta+q<u$.  Considering now $r-q<\delta$ we obtain
\begin{equation*}
  \int_0^{q+\delta}(q+\delta-r)^{2 H}dr=\frac{1}{2H+1}(q+\delta)^{2H+1}
\end{equation*}
for $\delta+q<u$ and
\begin{equation*}
  \int_0^u(q+\delta-r)^{2 H}dr
  =\frac{1}{2H+1}\bigg((q+\delta)^{2H+1}-(q+\delta-u)^{2H+1}\bigg)
\end{equation*}
for $\delta+q>u$. Collecting all the previous cases, when $\delta <u$ we obtain that $I_1$ equals to
\begin{align*}
  I_1(u)= & \frac{1}{2\delta^2(2H+1)}\int_0^{u-\delta}((u-q-\delta)^{2H+1}+(q+\delta)^{2H+1})dq\\
 &+\frac{1}{2\delta^2(2H+1)}\int_{u-\delta}^u((q+\delta)^{2H+1}-(q+\delta-u)^{2H+1})dq\\
  =&\frac{1}{2\delta^2(2H+1)(2H+2)}((u+\delta)^{2H+2}-2\delta^{2H+2}+(u-\delta)^{2H+2}).
\end{align*}
On the other hand, when $\delta>u$,
\begin{align*}
 I_1(u)=&\frac{1}{2\delta^2}\int_0^u\int_0^u(q+\delta-r)^{2H}drdq\\
 =&\frac{1}{2\delta^2}\int_0^u\frac{1}{2H+1}((q+\delta)^{2H+1}-(q+\delta-u)^{2H+1})dq\\
 =&\frac{1}{2\delta^2(2H+1)(2H+2)}((u+\delta)^{2H+2}-2\delta^{2H+2}+(\delta-u)^{2H+2}),
\end{align*}
and therefore for $\delta>u$ we conclude that
\begin{align*}
 I_1(u)=&\frac{1}{2\delta^2(2H+1)(2H+2)}((u+\delta)^{2H+2}-2\delta^{2H+2}+(\delta-u)^{2H+2}).
\end{align*}

Finally we deal with $I_2(u)$. As before, assume in a first step that $\delta<u$. In addition, suppose that $r-q>-\delta$. Then for $q<\delta$, we have
\begin{equation*}
  \int_0^u(r-q+\delta)^{2H}dr=\frac{1}{2H+1}\bigg((u-q+\delta)^{2H+1}-(\delta-q)^{2H+1}\bigg)
\end{equation*}
while for $q>\delta$,
\begin{equation*}
  \int_{q-\delta}^u(r-q+\delta)^{2H}dr=\frac{1}{2H+1}(u-q+\delta)^{2H+1}.
\end{equation*}
If we suppose that $r-q<-\delta$, then we obtain
\begin{equation*}
  \int_0^{q-\delta}(q-\delta-r)^{2H}dr=\frac{1}{2H+1}(q-\delta)^{2H+1}
\end{equation*}
that it is well-defined since in that case $q>\delta$.
Collecting all the previous cases, when $\de<u$ we obtain that
\begin{align*}
  I_2(u) = &\frac{1}{2\delta^2(2H+1)}\int_0^\delta ( (u-q+\delta)^{2H+1}-(\delta-q)^{2H+1})dq\\
 & + \frac{1}{2\delta^2(2H+1)}\int_\delta^u((u-q+\delta)^{2H+1}+(q-\delta)^{2H+1}) dq\\
  =&\frac{1}{2\delta^2(2H+1)(2H+2)}((u+\delta)^{2H+2}-2\delta^{2H+2}+(u-\de)^{2H+2}).
\end{align*}
On the other hand, when $\delta>u$,
\begin{align*}
  I_2(u) &=\frac{1}{2\delta^2}\int_0^u\int_0^u(r+\delta-q)^{2H}drdq\\
  &=\frac{1}{2\delta^2(2H+1)}\int_0^u((u+\delta-q)^{2H+1}-(\delta-q)^{2H+1})dq\\
  &=\frac{1}{2\delta^2(2H+1)(2H+2)}((u+\delta)^{2H+2}-2\delta^{2H+2}+(\delta-u)^{2H+2}).
\end{align*}

As a result, it turns out that the covariance of $\omega_\de$ is the $m\times m$ matrix with diagonal elements
$I(u)$ and off-diagonal elements equal to zero.

\vskip.5cm

Now we consider $\EE  (\omega_\delta(u)\omega(u))$. As before, when $i\neq j$, $\EE  (\omega^i_\delta(u)\omega^j(u))=0$, hence we only consider the case $i=j$. We have that
\begin{align*}
 J(u):= \EE  (\omega^i_\delta(u)\omega^i(u))&=\EE\Big(\frac{1}{\delta}\int_0^u\theta_r\omega^i(\delta)\omega^i(u)dr\Big)\\
  &=\frac{1}{2\delta}\int_0^u\big((r+\delta)^{2H}-|r+\delta-u|^{2H}-|r|^{2H}+|r-u|^{2H}\big)dr\\
  &=\frac{1}{2\delta(2H+1)}((u+\delta)^{2H+1}-\delta^{2H+1})-\frac{1}{2\delta}J_1(u),
\end{align*}
where, for $\delta<u$,
\begin{align*}
  J_1(u)=&\int_{u-\delta}^u(r+\delta-u)^{2H}dr+\int_0^{u-\delta}(u-\delta-r)^{2H}dr\\
  =&\frac{1}{2H+1}(\delta^{2H+1}+(u-\delta)^{2H+1})
\end{align*}
and for $\delta>u$,
\begin{equation*}
  J_1(u)=\frac{1}{2H+1}(\delta^{2H+1}-(\delta-u)^{2H+1}).
\end{equation*}

As a consequence,
     \begin{align*}
          J(u)=&\frac{1}{{2}\delta(2H+1)}\times\left\{\begin{array}{lcr}(u+\delta)^{2H+1}-2\delta^{2H+1}-(u-\delta)^{2H+1}, &u>\delta&\\
          (u+\delta)^{2H+1}-2\delta^{2H+1}+(\delta-u)^{2H+1}, &u<\delta.&
          \end{array}          \right.
        \end{align*}

Therefore, $\EE  (\omega_\delta(u)\omega(u))$
is an $m\times m$ matrix with diagonal elements
{$J(u)$} and off-diagonal elements equal to zero.

\end{proof}
Now we can prove the main result of this section.

\begin{proof} (of Theorem \ref{c1}). Let us consider $u\geq 0$. Since $X_\delta=\omega_\delta-\omega$, then we know that for any $1\leq i\leq j\leq m$,
$$\EE(X^i_\delta (u)X^j_\delta(u))=\EE(\omega^i_\delta(u) \omega_\delta^j(u))+\EE(\omega^i (u) \omega^j(u))-\EE(\omega^i_\delta(u) \omega^j(u))-\EE(\omega^j_\delta(u) \omega^i(u)).$$
Then the result follows simply taking into account Lemma \ref{l2} and that $\sigma^2_\omega(u)=|u|^{2H}{\rm id}$.
\end{proof}

\begin{lemma}\label{l5}
When $u<0$ the formulas in Theorem \ref{c1} and Lemma \ref{l2} hold repla\-cing $u$ by $-u$.
\end{lemma}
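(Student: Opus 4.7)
The claim is equivalent to asserting that the three second-moment functions $I(u)$, $J(u)$, and $K(u)$ appearing in Lemma \ref{l2} and Theorem \ref{c1} are \emph{even} functions of $u$. My plan is to reduce the case $u<0$ to the already-established case $u>0$ by exploiting the invariance of the fBm covariance, which is built from $|\cdot|^{2H}$, under the reflection $t\mapsto-t$.

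First I would treat $I(u)=\sigma_{\omega^i_\de}^2(u)$. Writing it out as a double integral,
$$I(u) = \frac{1}{\de^2}\int_0^u\!\!\int_0^u C(r,q)\,dr\,dq,\qquad C(r,q):=\EE[(\omega^i(r+\de)-\omega^i(r))(\omega^i(q+\de)-\omega^i(q))],$$
I would use that $C(r,q)$ depends only on $r-q$ (stationarity of increments of the fBm) and is moreover \emph{even} in $r-q$ (being a covariance). For $u<0$, the two sign flips from the reversed integration bounds cancel, and the change of variables $(r,q)\mapsto(-r,-q)$ maps the integral into the corresponding one over $[0,-u]\times[0,-u]$ with unchanged integrand, yielding $I(u)=I(-u)$.

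Next I would handle $J(u)=\EE[\omega^i_\de(u)\omega^i(u)]$. Using the explicit fBm covariance one has
$$J(u)=\frac{1}{2\de}\int_0^u\!\bigl(|r+\de|^{2H}-|r+\de-u|^{2H}-|r|^{2H}+|r-u|^{2H}\bigr)\,dr.$$
For $u<0$, I would first substitute $r=-s$ (which reverses the orientation and introduces an overall minus sign), and then substitute $s=-u-t$ to flip the orientation on $[0,-u]$; using that $|\cdot|^{2H}$ is even, the integrand becomes term-by-term that of $J(-u)$, hence $J(u)=J(-u)$. Finally, for $K(u)=\sigma^2_{X_\de^i}(u)$ I would expand $X_\de=\omega_\de-\omega$ and use $\sigma^2_{\omega^i}(u)=|u|^{2H}$ together with independence of components to get $K(u)=I(u)-2J(u)+|u|^{2H}$; then $K(u)=K(-u)$ follows immediately from the previous two steps together with the evenness of $|\cdot|^{2H}$.

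The main obstacle will be the $J(u)$ step, where the integrand contains absolute values whose piecewise expressions depend on the signs of $r+\de$, $r-u$, etc.\ on different subintervals of $[0,-u]$. Matching the two integrands after the double substitution requires careful bookkeeping of which branches appear; in contrast, the claims for $I(u)$ and $K(u)$ are transparent consequences of general symmetry and cost essentially nothing beyond the $J$-identity.
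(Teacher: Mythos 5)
Your proposal is correct, and all three identities go through as you describe. It does, however, run along a somewhat different track than the paper's own argument, so a comparison is worth recording. The paper works at the level of the random variables on path space: it substitutes $r\mapsto r+u$ in $\int_0^u\theta_r\omega(\delta)\,dr$, absorbs the shift into $\theta_u$ using that $\theta_u$ is measure preserving, and for the cross term uses the identity $\omega(u)=-\theta_u\omega(-u)$, the two resulting minus signs cancelling. You instead work at the level of the explicit covariance kernels: for $I$ you use that the increment covariance $C(r,q)=\frac12\bigl(|r-q+\de|^{2H}+|r-q-\de|^{2H}-2|r-q|^{2H}\bigr)$ depends only on $r-q$ and is even in it, and for $J$ you perform two substitutions on the explicit fBm formula. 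The paper's route is shorter and avoids touching the kernels at all; yours is more elementary and makes the evenness visible directly in the formulas. One remark on your anticipated ``main obstacle'': there is none. Your two substitutions $r=-s$ followed by $s=-u-t$ compose to the single translation $r=t+u$ (the same change of variables the paper uses), and after it the integrand of $J(u)$ becomes \emph{exactly} the negative of the integrand of $J(-u)$ term by term, with every term still safely inside an absolute value; the overall minus sign is cancelled by the one coming from the reversed bounds, so no case analysis on the signs of $r+\de$, $r-u$, etc.\ is ever required. Your reduction $K(u)=I(u)-2J(u)+|u|^{2H}$ is precisely the decomposition used in the paper's proof of Theorem \ref{c1}, so that step matches the paper exactly.
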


\begin{proof}
Let $u<0$. Then it is easy to check that
\begin{align*}
  &\EE\bigg(\int_0^u\theta_r\omega(\delta)dr\int_0^u\theta_r\omega(\delta)dr\bigg)=\EE\bigg(\int_{-u}^0\theta_{r+u}\omega(\delta)dr\int_{-u}^0\theta_{r+u}\omega(\delta)dr\bigg)  \\
  =&\EE\bigg(\int^0_{-u}\theta_{r}\omega(\delta)dr\int_{-u}^0\theta_{r}\omega(\delta)dr\bigg) = \EE\bigg(\int_0^{-u}\theta_{r}\omega(\delta)dr\int_0^{-u}\theta_{r}\omega(\delta)dr\bigg).
\end{align*}
Similarly
\begin{align*}
    &\EE\bigg(\omega(u) \int_0^u\theta_r\omega(\delta)dr\bigg)= \EE\bigg((-\theta_u\omega(-u))\int_{-u}^0\theta_{r+u}\omega(\delta)dr\bigg)\\
=& - \EE\bigg(\omega(-u) \int_{-u}^0\theta_r\omega(\delta)dr\bigg)=\EE\bigg(\omega(-u)\int_0^{-u}\theta_r\omega(\delta)dr\bigg).
\end{align*}
 \end{proof}

In the particular case of dealing with the Brownian motion, that is, when $H=\frac{1}{2}$, then Theorem \ref{c1} reads as follows.
\begin{corollary}\label{c2} If $\omega$ denotes the Brownian motion, then the corresponding cova\-riance $\sigma_{X^i_\de}^2$ of each component $X^i_\de$ is given by
\begin{equation*}
\sigma_{X^i_\de}^2(u)=\left\{
\begin{array}{lcr}
u-\frac{u^3}{3\delta^2}, &\quad& 0\le u<\delta,\\
\frac 2 3\de,&\quad& u\ge \delta,
\end{array}
\right.\end{equation*}
for $1\le i\le m$.
\end{corollary}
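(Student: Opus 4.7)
The statement is a direct specialization of Theorem \ref{c1} to the case $H=1/2$, so the plan is purely computational: substitute $H=1/2$ into the formula for $K(u)$ given in Theorem \ref{c1} and simplify $\bar{K}(u)$ in each of the two regimes $0\le u<\delta$ and $u\ge\delta$.

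First I would record the constants arising from $H=1/2$: namely $\tfrac{1}{H+1}=\tfrac{2}{3}$, $\delta^{2H}=\delta$, and $(2H+1)(2H+2)=6$, so that Theorem \ref{c1} reduces to
\begin{equation*}
\sigma^2_{X^i_\delta}(u)=K(u)=\tfrac{2}{3}\delta+\tfrac{1}{6\delta^2}\bar K(u).
\end{equation*}
All that remains is to compute $\bar K(u)$ in the two cases using elementary cubic expansion.

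For $u\ge\delta$, the formula in Theorem \ref{c1} specializes to
\begin{equation*}
\bar K(u)=(u+\delta)^3+(u-\delta)^3-2u^3-3\delta\bigl((u+\delta)^2-(u-\delta)^2\bigr)+6\delta^2 u.
\end{equation*}
Expanding, the sum of cubes equals $2u^3+6u\delta^2$, so the first three terms contribute $6u\delta^2$; the difference of squares gives $4u\delta$, so the fourth term contributes $-12u\delta^2$; the last term contributes $+6u\delta^2$. These three terms cancel, yielding $\bar K(u)=0$, and hence $K(u)=\tfrac{2}{3}\delta$, matching the second branch of the claim.

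For $0\le u<\delta$, the formula becomes
\begin{equation*}
\bar K(u)=(u+\delta)^3+(\delta-u)^3-2u^3-3\delta\bigl((u+\delta)^2+(\delta-u)^2\bigr)+6\delta^2 u.
\end{equation*}
Here $(u+\delta)^3+(\delta-u)^3=2\delta^3+6u^2\delta$ and $(u+\delta)^2+(\delta-u)^2=2u^2+2\delta^2$, so after collecting terms one finds $\bar K(u)=-4\delta^3-2u^3+6u\delta^2$. Dividing by $6\delta^2$ and adding $\tfrac{2}{3}\delta$ gives $K(u)=u-\tfrac{u^3}{3\delta^2}$, as required. There is no real obstacle; the only thing to be careful about is bookkeeping of signs in the cubic expansions and making sure the two case formulas from Theorem \ref{c1} (which differ in the sign of $(u-\delta)$ versus $(\delta-u)$ terms) are applied consistently. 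The off-diagonal components of the covariance matrix vanish by independence of the components of $\omega$, which is already embedded in the ${\rm id}$ factor of Theorem \ref{c1}.
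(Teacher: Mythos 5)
Your proposal is correct and follows exactly the paper's route: the paper states Corollary \ref{c2} as the $H=\tfrac12$ specialization of Theorem \ref{c1} without writing out the algebra, and your cubic expansions (yielding $\bar K(u)=0$ for $u\ge\delta$ and $\bar K(u)=-4\delta^3-2u^3+6\delta^2u$ for $0\le u<\delta$) verify that specialization accurately.
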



\section*{Acknowledgements}
M.J. Garrido-Atienza and B. Schmalfu\ss \, would like to thank  VIASM in Hanoi (Vietnam) giving them the possibility for many fruitful discussions about the objectives of this article. The authors also would like to thank Robert Hesse for a critical proofreading of the paper.

H. Gao was supported by NSFC Grant No. 11531006.

M.J. Garrido-Atienza was supported by grants PGC2018-096540-I00 and US-1254251.

A. Gu likes to thank for a DAAD-K.C. Wong grant allowing him to work one semester at the FSU Jena (Germany).

\bibliographystyle{plain}
\bibliography{wong_zakai-rds-240220}
\end{document}